%% LyX 2.2.2 created this file.  For more info, see http://www.lyx.org/.
%% Do not edit unless you really know what you are doing.
\documentclass[oneside,english,reqno]{amsart}
\usepackage[T1]{fontenc}
\usepackage[latin9]{inputenc}
\setcounter{secnumdepth}{2}
\setcounter{tocdepth}{1}
\usepackage{babel}
\usepackage{array}
\usepackage{float}
\usepackage{units}
\usepackage{mathrsfs}
\usepackage{enumitem}
\usepackage{amstext}
\usepackage{amsthm}
\usepackage{amssymb}
\usepackage{graphicx}
\usepackage[unicode=true,pdfusetitle,
 bookmarks=true,bookmarksnumbered=false,bookmarksopen=false,
 breaklinks=false,pdfborder={0 0 1},backref=false,colorlinks=false]
 {hyperref}
\hypersetup{
 colorlinks=true,citecolor=blue,linkcolor=blue,linktocpage=true}

\makeatletter

%%%%%%%%%%%%%%%%%%%%%%%%%%%%%% LyX specific LaTeX commands.

\newcommand*\LyXbar{\rule[0.585ex]{1.2em}{0.25pt}}
%% Because html converters don't know tabularnewline
\providecommand{\tabularnewline}{\\}

%%%%%%%%%%%%%%%%%%%%%%%%%%%%%% Textclass specific LaTeX commands.
\numberwithin{equation}{section}
\numberwithin{figure}{section}
\numberwithin{table}{section}
      % auxiliary length 
\theoremstyle{plain}
\newtheorem{thm}{\protect\theoremname}[section]
  \theoremstyle{definition}
  \newtheorem{defn}[thm]{\protect\definitionname}
  \theoremstyle{plain}
  \newtheorem{lem}[thm]{\protect\lemmaname}
  \theoremstyle{plain}
  \newtheorem*{lem*}{\protect\lemmaname}
  \theoremstyle{remark}
  \newtheorem*{note*}{\protect\notename}
  \theoremstyle{remark}
  \newtheorem{rem}[thm]{\protect\remarkname}
  \theoremstyle{definition}
  \newtheorem{example}[thm]{\protect\examplename}
  \theoremstyle{plain}
  \newtheorem{cor}[thm]{\protect\corollaryname}
  \theoremstyle{remark}
  \newtheorem*{acknowledgement*}{\protect\acknowledgementname}

\@ifundefined{date}{}{\date{}}
%%%%%%%%%%%%%%%%%%%%%%%%%%%%%% User specified LaTeX commands.
\allowdisplaybreaks
\usepackage{needspace}
\usepackage{refstyle}
\usepackage{enumitem}
\usepackage[skins,breakable]{tcolorbox}

\newref{lem}{refcmd={Lemma \ref{#1}}}
\newref{thm}{refcmd={Theorem \ref{#1}}}
\newref{cor}{refcmd={Corollary \ref{#1}}}
\newref{sec}{refcmd={Section \ref{#1}}}
\newref{sub}{refcmd={Section \ref{#1}}}
\newref{subsec}{refcmd={Section \ref{#1}}}
\newref{chap}{refcmd={Chapter \ref{#1}}}
\newref{prop}{refcmd={Proposition \ref{#1}}}
\newref{exa}{refcmd={Example \ref{#1}}}
\newref{tab}{refcmd={Table \ref{#1}}}
\newref{rem}{refcmd={Remark \ref{#1}}}
\newref{def}{refcmd={Definition \ref{#1}}}
\newref{fig}{refcmd={Figure \ref{#1}}}

\setlist[enumerate]{itemsep=5pt,topsep=3pt}
\setlist[enumerate,1]{label=(\roman*),ref=\roman*}
\setlist[enumerate,2]{label=(\alph*),ref=\theenumi \alph*}

\usepackage{scalerel}

\newcommand\reallywidehat[1]{\arraycolsep=0pt\relax%
\begin{array}{c}
\stretchto{
  \scaleto{
    \scalerel*[\widthof{\ensuremath{#1}}]{\kern-.5pt\bigwedge\kern-.5pt}
    {\rule[-\textheight/2]{1ex}{\textheight}} %WIDTH-LIMITED BIG WEDGE
  }{\textheight} % 
}{0.5ex}\\           % THIS SQUEEZES THE WEDGE TO 0.5ex HEIGHT
#1\\                 % THIS STACKS THE WEDGE ATOP THE ARGUMENT
\rule{-1ex}{0ex}
\end{array}
}

\AtBeginDocument{
  
}

\makeatother

  \providecommand{\acknowledgementname}{Acknowledgement}
  \providecommand{\corollaryname}{Corollary}
  \providecommand{\definitionname}{Definition}
  \providecommand{\examplename}{Example}
  \providecommand{\lemmaname}{Lemma}
  \providecommand{\notename}{Note}
  \providecommand{\remarkname}{Remark}
\providecommand{\theoremname}{Theorem}

\begin{document}

\title{Spectral pairs and positive definite tempered distributions}

\author{Palle Jorgensen}

\address{(Palle E.T. Jorgensen) Department of Mathematics, The University
of Iowa, Iowa City, IA 52242-1419, U.S.A. }

\email{palle-jorgensen@uiowa.edu}

\urladdr{http://www.math.uiowa.edu/\textasciitilde{}jorgen/}

\author{Feng Tian}

\address{(Feng Tian) Department of Mathematics, Hampton University, Hampton,
VA 23668, U.S.A.}

\email{feng.tian@hamptonu.edu}

\subjclass[2000]{Primary 47L60, 46N30, 46N50, 42C15, 65R10; Secondary 46N20, 22E70,
31A15, 58J65, 81S25.}

\keywords{Hilbert space, reproducing kernels, tempered distributions, unitary
one-parameter group, harmonic decompositions, stationary-increment
stochastic processes, renormalization, spectral pair, Parseval frame.}
\begin{abstract}
The present paper presents two new approaches to Fourier series and
spectral analysis of singular measures.
\end{abstract}

\maketitle
\tableofcontents{}
\begin{quote}
\textquotedblleft There cannot be a language more universal and more
simple, more free from errors and obscurities...more worthy to express
the invariable relations of all natural things {[}than mathematics{]}.
{[}It interprets{]} all phenomena by the same language, as if to attest
the unity and simplicity of the plan of the universe, and to make
still more evident that unchangeable order which presides over all
natural causes\textquotedblright{} \LyXbar{} Joseph Fourier, The Analytical
Theory of Heat\vspace{1cm}
\end{quote}
A spectral pair in $\mathbb{R}^{k}$, is a pair $\left(\nu,\Lambda\right)$
where $\nu$ is a finite positive Borel measure on $\mathbb{R}^{k}$,
and where $\Lambda$ is a strictly discrete subset of $\mathbb{R}^{k}$
(see \cite{MR1215311,MR1655831,2017arXiv170504198H,MR1629847,MR1655832,MR1785282,MR2921088}).
From a given set $\Lambda$, there is then an associated positive
definite tempered distribution $F_{\Lambda}$, and a corresponding
generalized reproducing kernel Hilbert space $\mathscr{H}_{\Lambda}$.
We say that $\left(\nu,\Lambda\right)$ is a spectral pair iff the
set of Fourier exponentials from $\Lambda$ forms an orthogonal basis
in $L^{2}\left(\nu\right)$; intuitively, $L^{2}\left(\nu\right)$
admits an orthogonal $\Lambda$-Fourier series representation. In
this paper, we study the occurrence of spectral pairs within the framework
of positive definite tempered distributions, as defined by Laurent
Schwartz \cite{MR0179587}. Given $\Lambda$, we present a necessary
and sufficient condition for the existence of a finite positive Borel
measure $\nu$ such that $\left(\nu,\Lambda\right)$ is a spectral
pair. Our method relies on use of the generalized reproducing kernel
Hilbert space $\mathscr{H}_{\Lambda}$.

\section{Preliminaries}

In our theorems and proofs, we shall make use the particular reproducing
kernel Hilbert spaces (RKHSs) which allow us to give explicit formulas
for our solutions. The general framework of RKHSs were pioneered by
Aronszajn in the 1950s \cite{MR0051437}; and subsequently they have
been used in a host of applications; e.g., \cite{MR2327597,MR2558684}.

Positive definite functions on groups $G$ yield a special class of
positive definite kernels. In general, the covariance kernel of a
Gaussian process, indexed by $G$, is a positive definite kernel.
If the $G$-process is $G$-stationary, then its covariance kernel
will be defined from a positive definite function. While these notions
make sense in general, we shall restrict attention here to the case
of locally compact abelian groups, with an emphasis on the case $G=\mathbb{R}^{k}$.
But to simplify notation, it is convenient to begin the discussion
with the case $k=1$. The extension to the cases $k>1$ is fairly
straightforward, and we shall concentrate on the general case $G=\mathbb{R}^{k}$
in the main part of the paper. Our analysis will go beyond the case
of positive definite functions. Indeed, our main theorem is stated
in the context of positive definite tempered distributions. A comparison
of the analysis in the two cases will be given in section \ref{sec:para}
below.

\subsubsection{The RKHS $\mathscr{H}_{F}$}

For simplicity we focus on the case $G=\mathbb{R}$. 
\begin{defn}
\label{def:pdf}Let $\Omega$ be an open domain in $\mathbb{R}$.
A function $F:\Omega-\Omega\rightarrow\mathbb{C}$ is \emph{positive
definite } if 
\begin{equation}
\sum\nolimits _{i}\sum\nolimits _{j}c_{i}\overline{c}_{j}F\left(x_{i}-x_{j}\right)\geq0\label{eq:def-pd}
\end{equation}
for all finite sums with $c_{i}\in\mathbb{C}$, and all $x_{i}\in\Omega$.
We assume that all the p.d. functions are continuous and bounded.
\end{defn}
\begin{lem}[Two equivalent conditions for p.d.]
If $F$ is given continuous on $\mathbb{R}$, we have the following
two equivalent conditions for the positive definite property:
\begin{enumerate}
\item $\forall n\in\mathbb{N}$, $\forall\left\{ x_{i}\right\} _{1}^{n}$,
$\forall\left\{ c_{i}\right\} _{1}^{n}$, $x_{i}\in\mathbb{R}$, $c_{i}\in\mathbb{C}$,
\[
\sum_{i}\sum_{j}c_{i}\overline{c}_{j}F\left(x_{i}-x_{j}\right)\geq0;
\]
\item $\forall\varphi\in C_{c}\left(\mathbb{R}\right)$, we have: 
\[
\int_{\mathbb{R}}\int_{\mathbb{R}}\varphi\left(x\right)\overline{\varphi\left(y\right)}F\left(x-y\right)dxdy\geq0.
\]
\end{enumerate}
\end{lem}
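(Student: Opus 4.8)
The plan is to prove the two implications separately. In each direction one passes between a continuous object (the integral) and a discrete one (the sum) by a routine approximation argument, and the standing hypotheses that $F$ be continuous and bounded are exactly what legitimize the limits.

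For $(\mathrm{i})\Rightarrow(\mathrm{ii})$, fix $\varphi\in C_{c}(\mathbb{R})$ with $\operatorname{supp}\varphi\subset[-R,R]$. I would realize the double integral as a limit of Riemann sums: partition $[-R,R]$ into $N$ subintervals of length $h=2R/N$ with nodes $x_{1},\dots,x_{N}$, and set $c_{i}:=\varphi(x_{i})\,h$. Then hypothesis (i) gives
\[
\sum_{i}\sum_{j}c_{i}\overline{c_{j}}\,F(x_{i}-x_{j})=\sum_{i}\sum_{j}\varphi(x_{i})\overline{\varphi(x_{j})}\,F(x_{i}-x_{j})\,h^{2}\geq0 .
\]
Since $\varphi$ is continuous with compact support and $F$ is continuous (hence uniformly continuous on the compact set $[-2R,2R]$), the function $(x,y)\mapsto\varphi(x)\overline{\varphi(y)}F(x-y)$ is continuous and compactly supported, so these Riemann sums converge to $\iint\varphi(x)\overline{\varphi(y)}F(x-y)\,dx\,dy$. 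The limit of nonnegative numbers is nonnegative, which is (ii).

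For $(\mathrm{ii})\Rightarrow(\mathrm{i})$, fix $n$, points $x_{1},\dots,x_{n}\in\mathbb{R}$ and scalars $c_{1},\dots,c_{n}\in\mathbb{C}$. Choose $\psi\in C_{c}(\mathbb{R})$ with $\psi\geq0$, $\int\psi=1$ and $\operatorname{supp}\psi\subset[-1,1]$, put $\psi_{\varepsilon}(t):=\varepsilon^{-1}\psi(t/\varepsilon)$, and set $\varphi_{\varepsilon}:=\sum_{i}c_{i}\,\psi_{\varepsilon}(\cdot-x_{i})\in C_{c}(\mathbb{R})$. Expanding the quadratic form and applying (ii),
\[
0\leq\iint\varphi_{\varepsilon}(x)\overline{\varphi_{\varepsilon}(y)}F(x-y)\,dx\,dy=\sum_{i}\sum_{j}c_{i}\overline{c_{j}}\iint\psi_{\varepsilon}(x-x_{i})\psi_{\varepsilon}(y-x_{j})F(x-y)\,dx\,dy .
\]
After the substitutions $x=x_{i}+\varepsilon s$, $y=x_{j}+\varepsilon t$ the inner integral becomes $\iint\psi(s)\psi(t)\,F\!\left(x_{i}-x_{j}+\varepsilon(s-t)\right)ds\,dt$, which tends to $F(x_{i}-x_{j})$ as $\varepsilon\downarrow0$ by continuity of $F$ together with dominated convergence (using $\|F\|_{\infty}<\infty$ and that $\psi\otimes\psi\in L^{1}$, supported on $[-1,1]^{2}$). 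Passing to the limit in the finite sum yields $\sum_{i}\sum_{j}c_{i}\overline{c_{j}}F(x_{i}-x_{j})\geq0$, which is (i).

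The only genuinely delicate points are the two limit interchanges: convergence of the Riemann sums in the first implication and the passage to the limit under the finite sum in the second. Both are where I would be careful to invoke the standing hypotheses — continuity of $F$ drives the Riemann-sum limit, and continuity plus (local) boundedness of $F$ justify the dominated-convergence step. No deeper input, such as Bochner's theorem, is required.
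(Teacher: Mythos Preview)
Your argument is correct and follows the same line as the paper's: the paper's proof is the one-line hint ``Use Riemann integral approximation, and note that $F(\cdot-x)\in\mathscr{H}_F$, $\varphi\ast F\in\mathscr{H}_F$,'' and you have supplied exactly the details behind that hint --- Riemann sums for $(\mathrm{i})\Rightarrow(\mathrm{ii})$, and an approximate-identity construction (the same $\psi_\varepsilon(\cdot-x_i)\to\delta_{x_i}$ idea that the paper formalizes in its subsequent Lemma on $\varphi_{n,x}$) for $(\mathrm{ii})\Rightarrow(\mathrm{i})$. One small remark: in the second implication you do not actually need the global bound $\|F\|_\infty<\infty$, since $F$ is continuous and your integrand is supported in a fixed compact set, so a local bound already suffices for dominated convergence.
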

\begin{proof}
Use Riemann integral approximation, and note that $F\left(\cdot-x\right)\in\mathscr{H}_{F}$,
and $\varphi\ast F\in\mathscr{H}_{F}$. (See details below.)
\end{proof}

Consider a continuous positive definite function so $F$ is defined
on $\Omega-\Omega$. Set 
\begin{equation}
F_{y}\left(x\right):=F\left(x-y\right),\;\forall x,y\in\Omega.\label{eq:H0}
\end{equation}
Let $\mathscr{H}_{F}$ be the \emph{reproducing kernel Hilbert space
}(RKHS), which is the completion of 
\begin{equation}
\left\{ \sum\nolimits _{\text{finite}}c_{j}F_{x_{j}}\mid x_{j}\in\Omega,\:c_{j}\in\mathbb{C}\right\} \label{eq:H1}
\end{equation}
with respect to the inner product
\begin{equation}
\left\langle \sum\nolimits _{i}c_{i}F_{x_{i}},\sum\nolimits _{j}d_{j}F_{y_{j}}\right\rangle _{\mathscr{H}_{F}}:=\sum\nolimits _{i}\sum\nolimits _{j}c_{i}\overline{d}_{j}F\left(x_{i}-y_{j}\right);\label{eq:ip-discrete}
\end{equation}
modulo the subspace of functions of $\left\Vert \cdot\right\Vert _{\mathscr{H}_{F}}$-norm
zero. 

Below, we introduce an equivalent characterization of the RKHS $\mathscr{H}_{F}$,
which we will be working with in the rest of the paper. 
\begin{lem}
\label{lem:dense}Fix $\Omega=(0,\alpha)$. Let $\varphi_{n,x}\left(t\right)=n\varphi\left(n\left(t-x\right)\right)$,
for all $t\in\Omega$; where $\varphi$ satisfies 

\begin{enumerate}
\item $\mathrm{supp}\left(\varphi\right)\subset\left(-\alpha,\alpha\right)$;
\item $\varphi\in C_{c}^{\infty}$, $\varphi\geq0$;
\item $\int\varphi\left(t\right)dt=1$. Note that $\lim_{n\rightarrow\infty}\varphi_{n,x}=\delta_{x}$,
the Dirac measure at $x$.
\end{enumerate}
\end{lem}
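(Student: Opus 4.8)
The plan is to first realize the abstract completion $\mathscr{H}_{F}$ as a bona fide space of continuous functions on $\Omega$, then to show that for every $\varphi\in C_{c}^{\infty}\left(\Omega\right)$ the convolution
\[
F^{\varphi}\left(y\right):=\int_{\Omega}\varphi\left(t\right)F\left(y-t\right)\,dt,\qquad y\in\Omega,
\]
lies in $\mathscr{H}_{F}$, and finally to take $\varphi=\varphi_{n,x}$ and let $n\to\infty$, obtaining $F^{\varphi_{n,x}}\to F_{x}$ in $\mathscr{H}_{F}$. Since $\mathrm{span}\{F_{x}:x\in\Omega\}$ is dense in $\mathscr{H}_{F}$ by the very definition \eqref{eq:H1}, this shows that $\{F^{\varphi}:\varphi\in C_{c}^{\infty}\left(\Omega\right)\}$ is a dense subspace of $\mathscr{H}_{F}$, which is the advertised equivalent description: $\mathscr{H}_{F}$ is the completion of $C_{c}^{\infty}\left(\Omega\right)$, with $\varphi$ identified with $F^{\varphi}$, under $\left\langle \varphi,\psi\right\rangle :=\iint\varphi\left(s\right)\overline{\psi\left(t\right)}F\left(s-t\right)\,ds\,dt$.

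First I would record the estimate that turns $\mathscr{H}_{F}$ into a function space. From positive definiteness of $F$ (apply \eqref{eq:ip-discrete} to the augmented point set $\{y\}\cup\{x_{i}\}$, or invoke the reproducing property) together with Cauchy--Schwarz, one gets $\left|\xi\left(y\right)\right|\le\sqrt{F\left(0\right)}\,\left\Vert \xi\right\Vert _{\mathscr{H}_{F}}$ for every $\xi$ in the dense span \eqref{eq:H1}; hence point evaluations are uniformly bounded, every $\left\Vert \cdot\right\Vert _{\mathscr{H}_{F}}$-Cauchy sequence converges uniformly on $\Omega$, and $\mathscr{H}_{F}$-convergence implies pointwise convergence. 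Now fix $\varphi\in C_{c}^{\infty}\left(\Omega\right)$, choose partitions $P$ of $\mathrm{supp}\left(\varphi\right)$ with tags $t_{j}$ and mesh $\to 0$, and set $\xi_{P}:=\sum_{j}\varphi\left(t_{j}\right)\left|I_{j}\right|F_{t_{j}}\in\mathscr{H}_{F}$. Expanding via \eqref{eq:ip-discrete},
\[
\left\langle \xi_{P},\xi_{Q}\right\rangle _{\mathscr{H}_{F}}=\sum_{j}\sum_{k}\varphi\left(t_{j}\right)\overline{\varphi\left(s_{k}\right)}\left|I_{j}\right|\left|J_{k}\right|F\left(t_{j}-s_{k}\right)
\]
is a Riemann sum for $\iint\varphi\left(s\right)\overline{\varphi\left(t\right)}F\left(s-t\right)\,ds\,dt$, and since $F$ is continuous (hence uniformly continuous on the compact set $\mathrm{supp}\left(\varphi\right)-\mathrm{supp}\left(\varphi\right)\subset\left(-\alpha,\alpha\right)$) and $\varphi$ is continuous with compact support, these sums converge as the meshes shrink. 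Thus $\left(\xi_{P}\right)$ is $\mathscr{H}_{F}$-Cauchy; let $g$ be its $\mathscr{H}_{F}$-limit. By the boundedness of point evaluations, $g\left(y\right)=\lim_{P}\xi_{P}\left(y\right)=\lim_{P}\sum_{j}\varphi\left(t_{j}\right)\left|I_{j}\right|F\left(y-t_{j}\right)=F^{\varphi}\left(y\right)$ (using \eqref{eq:H0}), so $F^{\varphi}=g\in\mathscr{H}_{F}$. Passing to the limit in the display gives $\left\Vert F^{\varphi}\right\Vert _{\mathscr{H}_{F}}^{2}=\iint\varphi\left(s\right)\overline{\varphi\left(t\right)}F\left(s-t\right)\,ds\,dt$, and the same computation with $\xi_{Q}$ replaced by $F_{y}$ gives $\left\langle F^{\varphi},F_{y}\right\rangle _{\mathscr{H}_{F}}=\int_{\Omega}\varphi\left(t\right)F\left(t-y\right)\,dt$; more generally $\left\langle F^{\varphi},F^{\psi}\right\rangle _{\mathscr{H}_{F}}=\iint\varphi\left(s\right)\overline{\psi\left(t\right)}F\left(s-t\right)\,ds\,dt$.

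Finally I would apply this with $\varphi=\varphi_{n,x}$; by the support hypothesis (1), $\varphi_{n,x}\in C_{c}^{\infty}\left(\Omega\right)$ once $n$ is large, and $\varphi_{n,x}$ is real and nonnegative. Expanding,
\[
\left\Vert F^{\varphi_{n,x}}-F_{x}\right\Vert _{\mathscr{H}_{F}}^{2}=\left\Vert F^{\varphi_{n,x}}\right\Vert _{\mathscr{H}_{F}}^{2}-2\,\mathrm{Re}\left\langle F^{\varphi_{n,x}},F_{x}\right\rangle _{\mathscr{H}_{F}}+\left\Vert F_{x}\right\Vert _{\mathscr{H}_{F}}^{2},
\]
and from the formulas above, $\left\Vert F_{x}\right\Vert _{\mathscr{H}_{F}}^{2}=F\left(0\right)$, $\left\langle F^{\varphi_{n,x}},F_{x}\right\rangle _{\mathscr{H}_{F}}=\int\varphi_{n,x}\left(t\right)F\left(t-x\right)\,dt\to F\left(0\right)$, and $\left\Vert F^{\varphi_{n,x}}\right\Vert _{\mathscr{H}_{F}}^{2}=\iint\varphi_{n,x}\left(s\right)\varphi_{n,x}\left(t\right)F\left(s-t\right)\,ds\,dt\to F\left(0\right)$, the three limits following from $\varphi_{n,x}\to\delta_{x}$, continuity of $F$ at the origin, and $\iint\varphi_{n,x}\left(s\right)\varphi_{n,x}\left(t\right)\,ds\,dt=1$. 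Hence the right-hand side tends to $F\left(0\right)-2F\left(0\right)+F\left(0\right)=0$, i.e.\ $F^{\varphi_{n,x}}\to F_{x}$ in $\mathscr{H}_{F}$. Since $\varphi\mapsto F^{\varphi}$ is linear and each $F_{x}$ arises as such a limit, $\{F^{\varphi}:\varphi\in C_{c}^{\infty}\left(\Omega\right)\}$ is dense in $\mathscr{H}_{F}$, which is the equivalent characterization.

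The main obstacle is the identification in the second paragraph: the completion defining $\mathscr{H}_{F}$ is a priori just an abstract Hilbert space (obtained after quotienting by the $\left\Vert \cdot\right\Vert _{\mathscr{H}_{F}}$-null vectors), so one must be sure that the $\mathscr{H}_{F}$-limit $g$ of the Riemann sums $\xi_{P}$ is literally the pointwise-defined function $F^{\varphi}$ and not merely an abstract vector with the correct norm. The boundedness of point evaluations is exactly what closes this gap --- it promotes $\mathscr{H}_{F}$-convergence to uniform convergence on $\Omega$ --- and it is also what lets one pass Riemann sums through the limit. The remaining ingredients, namely uniform continuity of $\left(s,t\right)\mapsto F\left(s-t\right)$ on compacta, convergence of Riemann sums, the support bookkeeping for $\varphi_{n,x}$, and the approximate-identity limits as $n\to\infty$, are routine, given that $F$ is continuous (and bounded) and that $\varphi$ satisfies (1)--(3).
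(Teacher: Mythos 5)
Your proposal is correct and follows the same route the paper intends, namely Riemann-sum approximation to show that each $F_{\varphi}=\varphi\ast F$ lies in $\mathscr{H}_{F}$ with $\left\Vert F_{\varphi}\right\Vert _{\mathscr{H}_{F}}^{2}=\iint\varphi\left(s\right)\overline{\varphi\left(t\right)}F\left(s-t\right)\,ds\,dt$, followed by the mollifier limit $\left\Vert F_{\varphi_{n,x}}-F\left(\cdot-x\right)\right\Vert _{\mathscr{H}_{F}}\rightarrow0$ to conclude density; the paper only sketches this (eq.\ (\ref{eq:approx}) and the cited references), whereas you supply the details, in particular the boundedness of point evaluations needed to identify the abstract limit with the pointwise convolution.
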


\begin{lem}
\label{lem:RKHS-def-by-integral}The RKHS, $\mathscr{H}_{F}$, is
the Hilbert completion of the functions 
\begin{equation}
F_{\varphi}\left(x\right)=\int_{\Omega}\varphi\left(y\right)F\left(x-y\right)dy,\;\forall\varphi\in C_{c}^{\infty}\left(\Omega\right),x\in\Omega\label{eq:H2}
\end{equation}
with respect to the inner product
\begin{equation}
\left\langle F_{\varphi},F_{\psi}\right\rangle _{\mathscr{H}_{F}}=\int_{\Omega}\int_{\Omega}\varphi\left(x\right)\overline{\psi\left(y\right)}F\left(x-y\right)dxdy,\;\forall\varphi,\psi\in C_{c}^{\infty}\left(\Omega\right).\label{eq:hi2}
\end{equation}
In particular, 
\begin{equation}
\left\Vert F_{\varphi}\right\Vert _{\mathscr{H}_{F}}^{2}=\int_{\Omega}\int_{\Omega}\varphi\left(x\right)\overline{\varphi\left(y\right)}F\left(x-y\right)dxdy,\;\forall\varphi\in C_{c}^{\infty}\left(\Omega\right)\label{eq:hn2}
\end{equation}
and 
\begin{equation}
\left\langle F_{\varphi},F_{\psi}\right\rangle _{\mathscr{H}_{F}}=\int_{\Omega}F_{\varphi}\left(x\right)\overline{\psi\left(x\right)}dx,\;\forall\phi,\psi\in C_{c}^{\infty}(\Omega).
\end{equation}
\end{lem}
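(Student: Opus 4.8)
The plan is to show that the ``integral'' description of $\mathscr{H}_F$ in \eqref{eq:H2}--\eqref{eq:hi2} and its ``discrete'' definition in \eqref{eq:H1}--\eqref{eq:ip-discrete} produce one and the same Hilbert space. It suffices to establish three things: (i) for every $\varphi\in C_c^\infty(\Omega)$ the function $F_\varphi$ of \eqref{eq:H2} belongs to $\mathscr{H}_F$, with $\mathscr{H}_F$-norm given by \eqref{eq:hn2}; (ii) the set $\{F_\varphi:\varphi\in C_c^\infty(\Omega)\}$ is dense in $\mathscr{H}_F$; and (iii) the sesquilinear form in \eqref{eq:hi2} is the restriction of $\langle\cdot,\cdot\rangle_{\mathscr{H}_F}$ to this set. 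Granting (i)--(iii), the Hilbert completion of $\{F_\varphi\}$ under the norm \eqref{eq:hn2} is canonically isometric with $\mathscr{H}_F$, and the two remaining displays of the lemma are immediate consequences of \eqref{eq:hi2}.

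For (i) and (iii), fix $\varphi$ and approximate $F_\varphi$ inside $\mathscr{H}_F$ by the Riemann sums $S_\pi=\sum_j\varphi(y_j)\,F_{y_j}\,|I_j|$ associated with a partition $\pi=\{I_j\}$ (with sample points $y_j\in I_j$) of a compact interval in $\Omega$ containing $\operatorname{supp}(\varphi)$. Each $S_\pi$ lies in the span \eqref{eq:H1}, and by \eqref{eq:ip-discrete}
\[
\langle S_\pi,S_{\pi'}\rangle_{\mathscr{H}_F}=\sum_{j,k}\varphi(y_j)\,\overline{\varphi(y_k')}\,F(y_j-y_k')\,|I_j|\,|I_k'|
\]
is a double Riemann sum for the integral in \eqref{eq:hi2}. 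Since $\varphi$ is continuous with compact support and $F$ is bounded and continuous, the integrand $\varphi(x)\overline{\varphi(y)}F(x-y)$ is continuous on the compact square $\operatorname{supp}(\varphi)^2$; hence these sums converge as the meshes of $\pi,\pi'$ tend to $0$, and in particular $\|S_\pi-S_{\pi'}\|_{\mathscr{H}_F}^2\to0$, so $(S_\pi)$ is Cauchy in $\mathscr{H}_F$. Let $h$ be its limit. Testing against kernels, $h(x)=\langle h,F_x\rangle_{\mathscr{H}_F}=\lim_\pi\langle S_\pi,F_x\rangle_{\mathscr{H}_F}$, and one identifies this limit with the function $F_\varphi(x)$ of \eqref{eq:H2}; thus $h=F_\varphi\in\mathscr{H}_F$. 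Passing to the limit (with $\pi=\pi'$) gives \eqref{eq:hn2}, and polarization together with the linearity $F_\varphi+i^kF_\psi=F_{\varphi+i^k\psi}$ upgrades it to \eqref{eq:hi2}, which is (iii). I expect this Cauchy estimate for the double Riemann sums --- plus the small conjugation check identifying the limit --- to be the main, essentially routine, technical point.

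For (ii), use the approximate identity $\varphi_{n,x}$ of Lemma \ref{lem:dense}. Using \eqref{eq:hn2} and $\|F_x\|_{\mathscr{H}_F}^2=F(0)$,
\[
\|F_{\varphi_{n,x}}-F_x\|_{\mathscr{H}_F}^2=\iint\varphi_{n,x}(s)\,\overline{\varphi_{n,x}(t)}\,F(s-t)\,ds\,dt-2\,\mathrm{Re}\,\langle F_{\varphi_{n,x}},F_x\rangle_{\mathscr{H}_F}+F(0).
\]
As $n\to\infty$, since $\varphi_{n,x}\to\delta_x$ and $F$ is continuous with $F(0)\ge0$ real, the double integral and $\langle F_{\varphi_{n,x}},F_x\rangle_{\mathscr{H}_F}$ both tend to $F(0)$, so the right-hand side tends to $F(0)-2F(0)+F(0)=0$. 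Thus each $F_x$, $x\in\Omega$, lies in the $\mathscr{H}_F$-closure of $\{F_\varphi\}$, and since $\operatorname{span}\{F_x:x\in\Omega\}$ is dense in $\mathscr{H}_F$ by its very definition, so is $\{F_\varphi\}$.

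Finally, the last two displayed identities of the lemma follow by Fubini's theorem --- legitimate since $\varphi,\psi$ have compact support and $F$ is bounded --- together with the Hermitian symmetry $F(-t)=\overline{F(t)}$ of a continuous positive definite function: interchanging the order of integration in \eqref{eq:hi2} turns $\langle F_\varphi,F_\psi\rangle_{\mathscr{H}_F}$ into $\int_\Omega F_\varphi(x)\,\overline{\psi(x)}\,dx$. The only point requiring attention throughout is the careful bookkeeping of complex conjugates, so that the convolution convention of \eqref{eq:H2} stays consistent with the reproducing identity $F_\varphi(x)=\langle F_\varphi,F_x\rangle_{\mathscr{H}_F}$.
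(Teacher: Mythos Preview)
Your proof is correct and follows essentially the same route as the paper, only with considerably more detail. The paper's own proof is little more than a sketch: it records the single convergence $\|F_{\varphi_{n,x}}-F(\cdot-x)\|_{\mathscr{H}_F}\to 0$ (your step~(ii)) and defers the remaining verifications to the references \cite{MR863534,MR874059}. Your Riemann-sum argument for (i) and~(iii), and the explicit Fubini/Hermitian-symmetry check for the last identity, simply fill in what the paper leaves implicit; the density step via the approximate identity $\varphi_{n,x}$ from Lemma~\ref{lem:dense} is exactly the paper's argument.
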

\begin{proof}
Indeed, by Lemma \ref{lem:RKHS-def-by-integral}, we have 
\begin{equation}
\left\Vert F_{\varphi_{n,x}}-F\left(\cdot-x\right)\right\Vert _{\mathscr{H}_{F}}\rightarrow0,\;\mbox{as }n\rightarrow\infty.\label{eq:approx}
\end{equation}
Hence $\left\{ F_{\varphi}\right\} _{\varphi\in C_{c}^{\infty}\left(\Omega\right)}$
spans a dense subspace in $\mathscr{H}_{F}$. 

For more details, see \cite{MR863534,MR874059}.

\end{proof}

The following two conditions (\ref{eq:bdd})($\Leftrightarrow$(\ref{eq:bdd2}))
below will be used to characterize elements in the Hilbert space $\mathscr{H}_{F}$.
\begin{thm}
\label{thm:HF}A continuous function $\xi:\Omega\rightarrow\mathbb{C}$
is in $\mathscr{H}_{F}$ if and only if there exists $A_{0}>0$, such
that
\begin{equation}
\sum\nolimits _{i}\sum\nolimits _{j}c_{i}\overline{c}_{j}\xi\left(x_{i}\right)\overline{\xi\left(x_{j}\right)}\leq A_{0}\sum\nolimits _{i}\sum\nolimits _{j}c_{i}\overline{c}_{j}F\left(x_{i}-x_{j}\right)\label{eq:bdd}
\end{equation}
for all finite system $\left\{ c_{i}\right\} \subset\mathbb{C}$ and
$\left\{ x_{i}\right\} \subset\Omega$.

Equivalently, for all $\psi\in C_{c}^{\infty}\left(\Omega\right)$,
\begin{eqnarray}
\left|\int_{\Omega}\psi\left(y\right)\overline{\xi\left(y\right)}dy\right|^{2} & \leq & A_{0}\int_{\Omega}\int_{\Omega}\psi\left(x\right)\overline{\psi\left(y\right)}F\left(x-y\right)dxdy\label{eq:bdd2}
\end{eqnarray}
Note that, if $\xi\in\mathscr{H}_{F}$, then the LHS of (\ref{eq:bdd2})
is $\vert\left\langle F_{\psi},\xi\right\rangle _{\mathscr{H}_{F}}\vert^{2}$.
Indeed,
\begin{eqnarray*}
\left|\left\langle \xi,F_{\psi}\right\rangle _{\mathscr{H}_{F}}\right|^{2} & = & \left|\left\langle \xi,\int_{\Omega}\psi\left(y\right)F_{y}\:dy\right\rangle _{\mathscr{H}_{F}}\right|^{2}\\
 & = & \left|\int_{\Omega}\overline{\psi\left(y\right)}\left\langle \xi,F_{y}\right\rangle _{\mathscr{H}_{F}}dy\right|^{2}\\
 & = & \left|\int_{\Omega}\overline{\psi\left(y\right)}\xi\left(y\right)dy\right|^{2}\;(\text{by the reproducing property}).
\end{eqnarray*}
\end{thm}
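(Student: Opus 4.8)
The plan is to show that each of (\ref{eq:bdd}) and (\ref{eq:bdd2}) is equivalent to the membership $\xi\in\mathscr{H}_F$; the asserted equivalence of the two conditions is then immediate by transitivity. Both halves run on the same engine: the displayed inequality says exactly that a naturally defined linear functional on a dense subspace of $\mathscr{H}_F$ is bounded, so the Riesz representation theorem supplies a representing vector $\eta\in\mathscr{H}_F$, and the reproducing property $\langle g,F_y\rangle_{\mathscr{H}_F}=g(y)$ then forces $\eta$ to coincide with $\xi$ pointwise on $\Omega$.

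Suppose first $\xi\in\mathscr{H}_F$. The computation displayed immediately after the statement gives $\langle F_\psi,\xi\rangle_{\mathscr{H}_F}=\int_\Omega\psi(y)\overline{\xi(y)}\,dy$ for every $\psi\in C_c^\infty(\Omega)$, using $F_\psi=\int_\Omega\psi(y)F_y\,dy$ and the reproducing property. Cauchy--Schwarz in $\mathscr{H}_F$ combined with (\ref{eq:hn2}) then yields $\bigl|\int_\Omega\psi(y)\overline{\xi(y)}\,dy\bigr|^2\le\|\xi\|_{\mathscr{H}_F}^2\int_\Omega\int_\Omega\psi(x)\overline{\psi(y)}F(x-y)\,dx\,dy$, i.e.\ (\ref{eq:bdd2}) with $A_0=\|\xi\|_{\mathscr{H}_F}^2$. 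The discrete estimate (\ref{eq:bdd}) follows by the same argument with a finite sum $\sum_i c_iF_{x_i}$ in place of $F_\psi$ --- now $\langle\sum_i c_iF_{x_i},\xi\rangle_{\mathscr{H}_F}=\sum_i c_i\overline{\xi(x_i)}$ and $\|\sum_i c_iF_{x_i}\|_{\mathscr{H}_F}^2=\sum_{i,j}c_i\overline{c_j}F(x_i-x_j)$ --- or, alternatively, by specializing (\ref{eq:bdd2}) to $\psi=\sum_i c_i\varphi_{n,x_i}$ and letting $n\to\infty$ via Lemma \ref{lem:dense}.

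Conversely, assume (\ref{eq:bdd2}). On the subspace $\mathscr{D}:=\{F_\varphi:\varphi\in C_c^\infty(\Omega)\}$, dense in $\mathscr{H}_F$ by Lemma \ref{lem:RKHS-def-by-integral}, define $L(F_\varphi):=\int_\Omega\varphi(y)\overline{\xi(y)}\,dy$. Estimate (\ref{eq:bdd2}) reads $|L(F_\varphi)|^2\le A_0\|F_\varphi\|_{\mathscr{H}_F}^2$; in particular $L$ vanishes on every $F_\varphi$ of $\mathscr{H}_F$-norm zero, so $L$ is well defined and bounded on $\mathscr{D}$ with $\|L\|\le\sqrt{A_0}$. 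Extend $L$ to $\mathscr{H}_F$ and apply Riesz to get $\eta\in\mathscr{H}_F$ with $L(h)=\langle h,\eta\rangle_{\mathscr{H}_F}$ for all $h$. Since $F_{\varphi_{n,x}}\to F_x$ in $\mathscr{H}_F$ by (\ref{eq:approx}), and $\int_\Omega\varphi_{n,x}(y)\overline{\xi(y)}\,dy\to\overline{\xi(x)}$ by Lemma \ref{lem:dense} and continuity of $\xi$, we obtain $\overline{\xi(x)}=\lim_n L(F_{\varphi_{n,x}})=\lim_n\langle F_{\varphi_{n,x}},\eta\rangle_{\mathscr{H}_F}=\langle F_x,\eta\rangle_{\mathscr{H}_F}=\overline{\eta(x)}$ for every $x\in\Omega$, the last step by the reproducing property. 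Hence $\xi=\eta$ as continuous functions on $\Omega$, so $\xi\in\mathscr{H}_F$. The implication $(\ref{eq:bdd})\Rightarrow\xi\in\mathscr{H}_F$ is identical, with $\mathrm{span}\{F_x:x\in\Omega\}$ in place of $\mathscr{D}$ and the functional $\sum_i c_iF_{x_i}\mapsto\sum_i c_i\overline{\xi(x_i)}$; here $\overline{\xi(x)}=\langle F_x,\eta\rangle_{\mathscr{H}_F}=\overline{\eta(x)}$ is read off by taking $h=F_x$ directly. Putting the pieces together, $(\ref{eq:bdd})\Leftrightarrow\xi\in\mathscr{H}_F\Leftrightarrow(\ref{eq:bdd2})$, and in particular $(\ref{eq:bdd})\Leftrightarrow(\ref{eq:bdd2})$.

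The step I expect to matter most is the well-definedness of $L$ on the pre-completion span --- that it does not detect the norm-zero vectors --- but this is granted for free, being precisely the content of the hypothesis. The only residual care is in passing between the ``discrete'' data $\{(c_i,x_i)\}$ and the ``smooth'' data $\varphi\in C_c^\infty(\Omega)$, which is exactly what Lemma \ref{lem:dense} and the mollifier convergence (\ref{eq:approx}) handle; apart from that, the argument is just Cauchy--Schwarz, the reproducing property, and Riesz, so I anticipate no essential obstacle.
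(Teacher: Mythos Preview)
Your proof is correct and is the standard RKHS argument. The paper itself supplies no separate proof of this theorem: the only argument present is the ``Note that\ldots'' computation embedded in the statement, which establishes the identity $\langle F_\psi,\xi\rangle_{\mathscr{H}_F}=\int_\Omega\psi(y)\overline{\xi(y)}\,dy$ via the reproducing property. Your write-up uses exactly that identity and then completes both directions---Cauchy--Schwarz for necessity, and a bounded-linear-functional/Riesz argument on the dense span for sufficiency---so your approach is fully consistent with (and more detailed than) what the paper provides.
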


\section{\label{sec:para}The parallels of p.d. functions vs distributions}

The study of positive definite (p.d.) functions, and p.d. kernels,
is motivated by diverse themes in analysis and operator theory, in
white noise analysis, applications of reproducing kernel (RKHS) theory,
extensions by Laurent Schwartz, and in reflection positivity from
quantum physics (see the cited references.) The parallels between
Bochner's theorem (for continuous p.d. functions), and the generalization
to Bochner/Schwartz representations for positive definite tempered
distributions will be made clear. In the first case, we have the Bochner
representation via finite positive measures $\mu$; and in the second
case, instead via tempered positive measures. This parallel also helps
make precise the respective reproducing kernel Hilbert spaces (RKHSs).
This further leads to a more unified approach to the treatment of
the stationary-increment Gaussian processes \cite{MR2793121,MR2966130,MR3402823}.
A key argument will rely on the existence of a unitary representation
$U$ of $\left(\mathbb{R},+\right)$, acting on the particular RKHS
under discussion. In fact, the same idea (with suitable modifications)
will also work in the wider context of locally compact groups. In
the abelian case, we shall make use of the Stone representation for
$U$ in the form of orthogonal projection valued measures; and in
more general settings, the Stone-Naimark-Ambrose-Godement (SNAG) representation
\cite{MR1503079}. 
\begin{thm}
\label{thm:p1}~
\begin{enumerate}
\item[(a)]  Let $F$ be a continuous positive definite (p.d.) function on $\mathbb{R}$
(a p.d. tempered distribution \cite{MR0185423,MR0179587}); then there
is a unique finite positive Borel measure $\mu$ on $\mathbb{R}$
(resp., a unique tempered measure on $\mathbb{R}$) such that $F=\widehat{\mu}$.
\item[(b)]  Given $F$ as above, let $\mathscr{H}_{F}$ denote the corresponding
kernel Hilbert space, i.e., the Hilbert completion of $\left\{ \varphi\ast F\right\} _{\varphi\in C_{c}\left(\Omega\right)}$
(resp. $\varphi\in\mathcal{S}$) w.r.t 
\[
\left\Vert \varphi\ast F\right\Vert _{\mathscr{H}_{F}}^{2}=\int_{\mathbb{R}}\int_{\mathbb{R}}\varphi\left(x\right)\overline{\varphi\left(y\right)}F\left(x-y\right)dxdy
\]
resp., $\left\langle F\left(x-y\right),\varphi\ast\overline{\varphi}\right\rangle $;
action in the sense of distributions. Then there is a unique isometric
transform
\begin{gather*}
\mathscr{H}_{F}\xrightarrow{\;T_{F}\;}L^{2}\left(\mathbb{R},\mathscr{B},\mu\right),\quad T_{F}\left(\varphi\ast F\right)=\widehat{\varphi},\;\text{i.e.,}\\
\left\Vert \varphi\ast F\right\Vert _{\mathscr{H}_{F}}^{2}=\int_{\mathbb{R}}\left|\widehat{\varphi}\right|^{2}d\mu=\left\Vert T_{F}\varphi\right\Vert _{L^{2}\left(\mu\right)}^{2}.
\end{gather*}
\item[(c)]  If $\mu$ is tempered, e.g., i.e., $\int_{\mathbb{R}}\frac{d\mu\left(\lambda\right)}{1+\lambda^{2}}<\infty$,
then 
\[
\left\Vert \varphi\ast F\right\Vert _{\mathscr{H}_{F}}^{2}=\int\left(\left|\widehat{\varphi}\right|^{2}+\big|\widehat{\left(D_{x}\varphi\right)}\big|^{2}\right)\frac{d\mu\left(\lambda\right)}{1+\lambda^{2}};
\]
where $D_{x}\varphi=\frac{d\varphi}{dx}$, and where ``$\widehat{\,\cdot\,}$''
denotes the standard Fourier transform on $\mathbb{R}$. 
\end{enumerate}
\end{thm}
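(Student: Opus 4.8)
The plan is to lift part~(a) from the classical literature and to derive both (b) and (c) from a single Fubini/Parseval identity. For (a): when $F$ is a continuous p.d.\ function on $\mathbb{R}$, Bochner's theorem yields the unique finite positive Borel measure $\mu$ with $F(x)=\widehat{\mu}(x)=\int_{\mathbb{R}}e^{i\lambda x}\,d\mu(\lambda)$; when $F$ is a p.d.\ tempered distribution, the Bochner--Schwartz theorem \cite{MR0185423,MR0179587} yields the unique positive tempered measure $\mu$ with $F=\widehat{\mu}$ in the sense of distributions. Uniqueness in either case is injectivity of the Fourier transform on finite measures, resp.\ on tempered distributions. (One could instead extract (a) from the spectral theorem applied to the canonical unitary one-parameter group on $\mathscr{H}_F$, cf.\ Section~\ref{sec:para}, but citing Bochner and Bochner--Schwartz is the shortest route.)

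For (b), set $T_F(\varphi\ast F):=\widehat{\varphi}$ on the dense subspace $\{\varphi\ast F\}$ of $\mathscr{H}_F$ ($\varphi\in C_c(\mathbb{R})$, resp.\ $\varphi\in\mathcal{S}$). Everything rests on the identity
\[
\|\varphi\ast F\|_{\mathscr{H}_F}^{2}=\int_{\mathbb{R}}\int_{\mathbb{R}}\varphi(x)\overline{\varphi(y)}\,F(x-y)\,dx\,dy=\int_{\mathbb{R}}\big|\widehat{\varphi}(\lambda)\big|^{2}\,d\mu(\lambda),
\]
obtained by substituting $F=\widehat{\mu}$ and interchanging the order of integration, whereupon the $x$- and $y$-integrals factor as $\widehat{\varphi}(\lambda)\,\overline{\widehat{\varphi}(\lambda)}$. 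In the continuous case with $\mu$ finite and $\varphi\in C_c$ this interchange is Fubini, the triple integrand being dominated by $\|\varphi\|_{1}^{2}\,\mu(\mathbb{R})<\infty$. In the tempered case one instead reads the double integral as the distribution pairing $\langle F,\varphi\ast\varphi^{\sharp}\rangle$ with $\varphi^{\sharp}(t):=\overline{\varphi(-t)}$ (so that $\widehat{\varphi\ast\varphi^{\sharp}}=|\widehat{\varphi}|^{2}$), and then $\langle\widehat{\mu},\varphi\ast\varphi^{\sharp}\rangle=\langle\mu,\widehat{\varphi\ast\varphi^{\sharp}}\rangle=\int_{\mathbb{R}}|\widehat{\varphi}|^{2}\,d\mu$, the last integral being finite because $\widehat{\varphi}\in\mathcal{S}$ and $\mu$ has at most polynomial growth. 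Since the identity shows that $\varphi\mapsto\widehat{\varphi}$ is norm-preserving, $T_F$ is a well-defined isometry on the dense subspace, and it extends uniquely by continuity to an isometry $T_F:\mathscr{H}_F\to L^{2}(\mathbb{R},\mathscr{B},\mu)$; uniqueness is automatic, $T_F$ being prescribed on a dense set. (Its range is moreover dense: feeding the mollifiers $\varphi_{n,x}$ of Lemma~\ref{lem:dense} into \eqref{eq:approx} gives $T_F F_x=e_x$ with $e_x(\lambda)=e^{i\lambda x}$, and the linear span of $\{e_x:x\in\mathbb{R}\}$, resp.\ $C_c^{\infty}$, is dense in $L^{2}(\mu)$; an isometry with dense range is onto.)

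Part~(c) is then immediate from (b): from $\big|\widehat{D_x\varphi}(\lambda)\big|=|\lambda|\,\big|\widehat{\varphi}(\lambda)\big|$ we get $\big|\widehat{\varphi}(\lambda)\big|^{2}+\big|\widehat{D_x\varphi}(\lambda)\big|^{2}=(1+\lambda^{2})\big|\widehat{\varphi}(\lambda)\big|^{2}$, hence
\[
\int_{\mathbb{R}}\Big(\big|\widehat{\varphi}\big|^{2}+\big|\widehat{D_x\varphi}\big|^{2}\Big)\frac{d\mu(\lambda)}{1+\lambda^{2}}=\int_{\mathbb{R}}\big|\widehat{\varphi}(\lambda)\big|^{2}\,d\mu(\lambda)=\|\varphi\ast F\|_{\mathscr{H}_F}^{2},
\]
all integrals converging absolutely because $\widehat{\varphi}\in\mathcal{S}$ and $\int_{\mathbb{R}}(1+\lambda^{2})^{-1}\,d\mu<\infty$. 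The only step demanding genuine care is the interchange/distributional-Parseval argument in (b), and in particular the verification that $\widehat{\varphi}\in L^{2}(\mu)$ in the tempered regime --- exactly where temperedness of $\mu$ is used; everything else is routine bookkeeping.
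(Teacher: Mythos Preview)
Your argument is correct. The paper itself does not prove this theorem; its entire proof reads ``See \cite{JT17a}.'' Your route --- Bochner (resp.\ Bochner--Schwartz) for (a), the Fubini/distributional-Parseval identity $\|\varphi\ast F\|_{\mathscr{H}_F}^{2}=\int_{\mathbb{R}}|\widehat{\varphi}|^{2}\,d\mu$ for (b), and the pointwise algebra $|\widehat{\varphi}|^{2}+|\widehat{D_x\varphi}|^{2}=(1+\lambda^{2})|\widehat{\varphi}|^{2}$ for (c) --- is the standard self-contained derivation and is almost certainly what the cited reference contains; there is nothing in-text to compare against.

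One minor remark: your parenthetical on surjectivity goes beyond what the theorem asserts (only an isometry is claimed), and the argument via $T_F F_x=e_x$ tacitly assumes $F(\cdot-x)\in\mathscr{H}_F$, which is fine in the continuous case but not literally available when $F$ is a genuine distribution. Since surjectivity is not part of the statement, this does not affect the validity of your proof; if you want to keep the remark, in the tempered case it is cleaner to argue density of the range directly from $\widehat{\mathcal{S}}=\mathcal{S}$ being dense in $L^{2}(\mu)$ for any tempered $\mu$.
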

\begin{proof}
See \cite{JT17a}.
\end{proof}

\subsection*{OVERVIEW}

~

\noindent\begin{minipage}[t]{1\columnwidth}%
\begin{minipage}[t]{0.48\columnwidth}%
\textbf{Continuous p.d. functions on $\mathbb{R}$}
\begin{lem*}
Let $F$ be a continuous function on $\mathbb{R}$. Then the following
are equivalent:
\begin{enumerate}[leftmargin=18pt]
\item $F$ is p.d., i.e., $\forall\varphi\in C_{c}\left(\mathbb{R}\right)$,
we have 
\begin{equation}
\int_{\mathbb{R}}\int_{\mathbb{R}}\varphi\left(x\right)\overline{\varphi\left(y\right)}F\left(x-y\right)dxdy\geq0.
\end{equation}
\item $\forall\left\{ x_{j}\right\} _{j=1}^{n}\subset\mathbb{R}$, $\forall\left\{ c_{j}\right\} _{j=1}^{n}\subset\mathbb{C}$,
and $\forall n\in\mathbb{N}$, we have 
\begin{equation}
\sum_{j=1}^{n}\sum_{k=1}^{n}c_{j}\overline{c}_{k}F\left(x_{j}-x_{k}\right)\geq0.
\end{equation}
 
\end{enumerate}
\end{lem*}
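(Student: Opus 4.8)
The plan is to prove the two implications separately, each by an approximation argument of the kind already flagged in the proof sketch of the earlier lemma on the two equivalent conditions for positive definiteness. For (ii) $\Rightarrow$ (i) I would fix $\varphi\in C_c(\mathbb{R})$ with $K:=\mathrm{supp}(\varphi)$ and observe that, since $F$ is continuous, the function $(x,y)\mapsto\varphi(x)\overline{\varphi(y)}F(x-y)$ is continuous on the compact set $K\times K$, hence its double integral is the limit of Riemann sums
\[
\sum_i\sum_j\big(\varphi(\xi_i)\,|I_i|\big)\,\overline{\big(\varphi(\xi_j)\,|I_j|\big)}\,F(\xi_i-\xi_j)
\]
taken over finite partitions $\{I_i\}$ of $K$ with sample points $\xi_i\in I_i$. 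Each such sum has the form $\sum_i\sum_j c_i\overline{c_j}F(\xi_i-\xi_j)$ with $c_i:=\varphi(\xi_i)|I_i|$, so it is $\ge 0$ by (ii); passing to the limit gives (i).

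For (i) $\Rightarrow$ (ii), given $n$, points $\{x_j\}_{j=1}^n\subset\mathbb{R}$ and scalars $\{c_j\}_{j=1}^n\subset\mathbb{C}$, I would form $\varphi^{(m)}:=\sum_{j=1}^n c_j\,\varphi_{m,x_j}$, where $\varphi_{m,x}$ denotes the mollifier of Lemma \ref{lem:dense} re-centered at $x$ (so that $\varphi_{m,x}\ge 0$, $\int\varphi_{m,x}=1$, and $\varphi_{m,x}\to\delta_x$; I write $m$ for the mollifier index to avoid clashing with the number of points $n$). Then $\varphi^{(m)}\in C_c^\infty(\mathbb{R})\subset C_c(\mathbb{R})$, and applying (i) to $\varphi^{(m)}$ and expanding the double integral yields
\[
0\le\sum_{j=1}^n\sum_{k=1}^n c_j\,\overline{c_k}\int\int\varphi_{m,x_j}(x)\,\overline{\varphi_{m,x_k}(y)}\,F(x-y)\,dx\,dy .
\]
The inner integral is $F(x-y)$ averaged against the probability density $\varphi_{m,x_j}\otimes\varphi_{m,x_k}$, which concentrates at $(x_j,x_k)$; by continuity of $F$ it tends to $F(x_j-x_k)$ as $m\to\infty$. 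Since the double sum is finite I can pass to the limit term by term and conclude $\sum_{j,k}c_j\overline{c_k}F(x_j-x_k)\ge 0$, which is (ii).

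I expect the only step requiring genuine care to be that last limit, $\int\int\varphi_{m,x_j}(x)\overline{\varphi_{m,x_k}(y)}F(x-y)\,dx\,dy\to F(x_j-x_k)$: it rests on the \emph{uniform} continuity of $F$ on a neighborhood of $x_j-x_k$ together with the fact that the $\varphi_{m,x}$ form an approximate identity — which is exactly why $F$ is assumed continuous (cf. Definition \ref{def:pdf}). Boundedness of $F$ plays no role here, since every integrand in sight is compactly supported. Finally, the same two-sided scheme — Riemann sums one way, mollification the other — goes through verbatim on $\mathbb{R}^k$, and, with $C_c^\infty$ replaced by the Schwartz class $\mathcal{S}$ and pointwise evaluations by distributional pairings, it is the mechanism behind the positive-definite tempered-distribution statement in Theorem \ref{thm:p1}.
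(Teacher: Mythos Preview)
Your proposal is correct and follows essentially the same approach as the paper's own (very terse) proof sketch: Riemann-sum approximation for (ii) $\Rightarrow$ (i), and mollifier approximation $\varphi_{m,x}\to\delta_x$ for (i) $\Rightarrow$ (ii). The paper's one-line proof (``Use Riemann integral approximation, and note that $F(\cdot-x)\in\mathscr{H}_F$, and $\varphi\ast F\in\mathscr{H}_F$'') is exactly the scheme you have written out in full, with the second clause pointing to the approximate-identity argument that you carry out via $\varphi^{(m)}=\sum_j c_j\varphi_{m,x_j}$.
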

\end{minipage}\hfill{}%
\begin{minipage}[t]{0.48\columnwidth}%
\textbf{p.d. tempered distributions on $\mathbb{R}$}
\begin{lem*}
Let $F$ be a tempered distribution on $\mathbb{R}$. Then $F$ is
p.d. if and only if 
\begin{equation}
\int_{\mathbb{R}}\int_{\mathbb{R}}\varphi\left(x\right)\overline{\varphi\left(y\right)}F\left(x-y\right)dxdy\geq0
\end{equation}
hold, for all $\varphi\in\mathcal{S}$, where $\mathcal{S}$ is the
\textup{Schwartz space}. 

Equivalently, 
\begin{equation}
\left\langle F\left(x-y\right),\varphi\otimes\overline{\varphi}\right\rangle \geq0,\;\forall\varphi\in\mathcal{S}.
\end{equation}
Here $\left\langle \cdot,\cdot\right\rangle $ denotes distribution
action. 
\end{lem*}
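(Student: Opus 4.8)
The plan is to show that each of the two displayed conditions is merely a rewriting of the single statement
\[
\big\langle F,\ \varphi*\varphi^{*}\big\rangle\ \ge\ 0\qquad\text{for all }\varphi\in\mathcal{S},
\]
where $\varphi^{*}\left(t\right):=\overline{\varphi\left(-t\right)}$, and then to recall that this is exactly Schwartz's notion of a positive definite tempered distribution. First I would pin down the meaning of ``$F\left(x-y\right)$'' as a distribution on $\mathbb{R}^{2}$: writing $L\left(x,y\right)=\left(x-y,y\right)$ for the linear automorphism of $\mathbb{R}^{2}$ of determinant $1$, set $F\left(x-y\right):=\left(F\otimes1\right)\circ L$. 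Since $1\in\mathcal{S}'\left(\mathbb{R}\right)$ and tensoring preserves temperedness, $F\otimes1\in\mathcal{S}'\left(\mathbb{R}^{2}\right)$, and composition with the linear isomorphism $L$ keeps it in $\mathcal{S}'\left(\mathbb{R}^{2}\right)$; since $\varphi\otimes\overline{\varphi}\in\mathcal{S}\left(\mathbb{R}^{2}\right)$, the pairing $\left\langle F\left(x-y\right),\varphi\otimes\overline{\varphi}\right\rangle$ is well defined, and it reduces to the classical iterated integral whenever $F$ is an ordinary function of polynomial growth.

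Next I would carry out the reduction. Unwinding the change of variables, $\left\langle F\left(x-y\right),\varphi\otimes\overline{\varphi}\right\rangle =\left\langle F\otimes1,\ \left(u,y\right)\mapsto\varphi\left(u+y\right)\overline{\varphi\left(y\right)}\right\rangle$, and integrating out the $y$-variable against the constant $1$ (Fubini for the tempered distribution $F\otimes1$) gives
\[
\big\langle F\left(x-y\right),\varphi\otimes\overline{\varphi}\big\rangle=\Big\langle F,\ u\mapsto\int_{\mathbb{R}}\varphi\left(u+y\right)\overline{\varphi\left(y\right)}\,dy\Big\rangle=\big\langle F,\ \varphi*\varphi^{*}\big\rangle,
\]
the last step by the substitution $y\mapsto-y$; note $\varphi*\varphi^{*}\in\mathcal{S}$ because $\mathcal{S}$ is stable under convolution. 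The same identity (read off directly when $F$ is a function, or taken as the \emph{definition} of the iterated integral in the general case) shows $\iint\varphi\left(x\right)\overline{\varphi\left(y\right)}F\left(x-y\right)dx\,dy=\left\langle F,\varphi*\varphi^{*}\right\rangle$ as well. Hence both displayed inequalities say precisely $\left\langle F,\varphi*\varphi^{*}\right\rangle \ge0$ for every $\varphi\in\mathcal{S}$, which is Schwartz's definition of positive definiteness; the equivalence follows at once. If instead one prefers to start from the Bochner--Schwartz form $F=\widehat{\mu}$ with $\mu\ge0$ tempered (Theorem~\ref{thm:p1}), the forward implication is immediate from Parseval: $\widehat{\varphi^{*}}=\overline{\widehat{\varphi}}$, whence $\left\langle \widehat{\mu},\varphi*\varphi^{*}\right\rangle =\left\langle \mu,\widehat{\varphi}\,\widehat{\varphi^{*}}\right\rangle =\int\big|\widehat{\varphi}\big|^{2}d\mu\ge0$; the reverse implication is the substantive half of the classical Bochner--Schwartz theorem.

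I expect the only genuinely delicate point to be this second step --- the rigorous ``Fubini'' manipulation for $F\otimes1$ under the linear change of variables $L$, i.e. the clean identification of $F\left(x-y\right)\in\mathcal{S}'\left(\mathbb{R}^{2}\right)$ with the functional $\psi\mapsto\left\langle F,\ u\mapsto\int_{\mathbb{R}}\psi\left(u+y,y\right)dy\right\rangle$ on all of $\mathcal{S}\left(\mathbb{R}^{2}\right)$. Once that identity is in hand, the equivalence of the two displayed conditions is automatic, and the link to $\mu\ge0$ is either the one-line Parseval computation above or a citation to Bochner--Schwartz; everything else is bookkeeping.
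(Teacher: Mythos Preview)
Your argument is correct. Note, however, that in the paper this lemma is not given a separate proof: it appears in the overview table as the tempered-distribution counterpart to the continuous case, and the second displayed inequality is precisely the paper's \emph{definition} of a positive definite tempered distribution (Definition~\ref{def:pd}); the first displayed double integral is used only as an informal shorthand for the same pairing (cf.\ the phrase ``with a slight abuse of notation'' following~\eqref{eq:d4}). So from the paper's standpoint there is nothing to prove beyond recognizing the two displays as the same object.

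Your reduction of both expressions to $\langle F,\varphi*\varphi^{*}\rangle$ makes this identification precise and is the standard way to give the symbol $F(x-y)$ a rigorous meaning as an element of $\mathcal{S}'(\mathbb{R}^{2})$; the Fubini step for $F\otimes 1$ under the shear $L$ and the Parseval link to Bochner--Schwartz are correct and go further than the paper itself does at this point. In short, you have supplied the justification the paper leaves implicit.
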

\end{minipage}%
\end{minipage}
\begin{center}
\textbf{}%
\noindent\begin{minipage}[t]{1\columnwidth}%
\begin{center}
\textbf{RKHS}
\par\end{center}
\begin{minipage}[t]{0.48\columnwidth}%
\textbf{Bochner's theorem.} 

$\exists!$ positive finite measure $\mu$ on $\mathbb{R}$ such that
\[
F\left(x\right)=\int_{\mathbb{R}}e^{ix\lambda}d\mu\left(\lambda\right).
\]
\end{minipage}\hfill{}%
\begin{minipage}[t]{0.48\columnwidth}%
\textbf{Bochner/Schwartz}

$\exists$ positive tempered measure $\mu$ on $\mathbb{R}$ such
that 
\[
F=\widehat{\mu}
\]
where $\widehat{\mu}$ is in the sense of distribution. %
\end{minipage}%
\end{minipage}
\par\end{center}

\begin{center}
\textbf{}%
\noindent\begin{minipage}[t]{1\columnwidth}%
\begin{minipage}[t]{0.48\columnwidth}%
Let $\mathscr{H}_{F}$ be the RKHS of $F$. 
\begin{itemize}[leftmargin=10pt]
\item Then 
\begin{equation}
\left\Vert \varphi\ast F\right\Vert _{\mathscr{H}_{F}}^{2}=\int_{\mathbb{R}}\left|\widehat{\varphi}\left(\lambda\right)\right|^{2}d\mu\left(\lambda\right)
\end{equation}
where $\widehat{\varphi}=$ the Fourier transform. 
\item $F$ admits the factorization 
\[
F\left(x_{1}-x_{2}\right)=\left\langle F\left(\cdot-x_{1}\right),F\left(\cdot-x_{2}\right)\right\rangle _{\mathscr{H}_{F}}
\]
$\forall x_{1},x_{2}\in\mathbb{R}$, with 

$\mathbb{R}\ni x\longrightarrow F\left(\cdot-x\right)\in\mathscr{H}_{F}$. 
\end{itemize}
\end{minipage}\hfill{}%
\begin{minipage}[t]{0.48\columnwidth}%
Let $\mathscr{H}_{F}$ denote the corresponding RKHS. 
\begin{itemize}[leftmargin=10pt]
\item For all $\varphi\in\mathcal{S}$, we have 
\begin{equation}
\left\Vert \varphi\ast F\right\Vert _{\mathscr{H}_{F}}^{2}=\left\langle F\left(x-y\right),\varphi\otimes\overline{\varphi}\right\rangle ,
\end{equation}
distribution action. 
\item $\mathcal{S}\ni\varphi\longmapsto\varphi\ast F\in\mathscr{H}_{F}$,
where 
\[
\left(\varphi\ast F\right)\left(\cdot\right)=\int\varphi\left(y\right)F\left(\cdot-y\right)dy.
\]
\end{itemize}
\end{minipage}%
\end{minipage}
\par\end{center}

\begin{center}
\textbf{}%
\noindent\begin{minipage}[t]{1\columnwidth}%
\begin{center}
\textbf{Applications}
\par\end{center}
\begin{minipage}[t]{0.48\columnwidth}%
Now applied to Bochner's theorem. 

Set $\mathscr{H}_{F}=$ RKHS of $F$, and $w_{0}=F\left(\cdot-0\right)$.
Then 
\[
U_{t}w_{0}=w_{t}=F\left(\cdot-t\right),\;t\in\mathbb{R}
\]
defines a strongly continuous unitary representation of $\mathbb{R}$. %
\end{minipage}\hfill{}%
\begin{minipage}[t]{0.48\columnwidth}%
On white noise space:
\[
\mathbb{E}\left(e^{i\left\langle \varphi,\cdot\right\rangle }\right)=e^{-\frac{1}{2}\int\left|\widehat{\varphi}\right|^{2}d\mu}
\]
where $\mathbb{E}\left(\cdots\right)=$ expectation w.r.t the Gaussian
path-space measure. 

(The proof for the special case when $F$ is assumed p.d. and continuous
carries over with some changes to the case when $F$ is a p.d. tempered
distribution.)%
\end{minipage}%
\end{minipage}
\par\end{center}
\begin{note*}
In both cases, we have the following representation for vectors in
the RKHS $\mathscr{H}_{F}$: 
\begin{equation}
\left\langle \varphi\ast F,\psi\ast F\right\rangle _{\mathscr{H}_{F}}=\left\langle \varphi\ast\overline{\psi},F\right\rangle ,\;\forall\varphi,\psi\in\mathcal{S};
\end{equation}
where $\varphi\ast F:=$ the standard convolution w.r.t. Lebesgue
measure.
\end{note*}

\section{Spectral pairs and p.d. distributions}
\begin{defn}[L. Schwartz]
Introduce the following locally convex topological (LCT) spaces,
such that we have continuous inclusions
\[
C_{c}^{\infty}\left(\mathbb{R}^{k}\right)\hookrightarrow\mathcal{S}_{k}\hookrightarrow C^{\infty}\left(\mathbb{R}^{k}\right),
\]
with corresponding duals for the distribution spaces, 
\[
\left(C_{c}^{\infty}\left(\mathbb{R}^{k}\right)\right)'\hookleftarrow\mathcal{S}_{k}'\hookleftarrow\left(C^{\infty}\left(\mathbb{R}^{k}\right)\right)'
\]
where $\mathscr{D}':=C_{c}^{\infty}\left(\mathbb{R}^{k}\right)'=$
all distributions, and $\mathscr{E}':=\left(C^{\infty}\right)'=$
all distributions of compact support. 
\end{defn}
\begin{defn}
\label{def:ud}A subset $\Lambda\subset\mathbb{R}^{k}$, $k\in\mathbb{N}$,
is said to be \emph{uniformly discrete} iff (Def.) there exists $r>0$
such that $\left|\lambda-\lambda'\right|\geq r$, for all $\lambda,\lambda'\in\Lambda$,
$\lambda\neq\lambda'$. In this case, we say $\Lambda\in\mathscr{D}_{k}$. 
\end{defn}
\begin{defn}[\cite{MR1215311,MR1655831,2017arXiv170504198H}]
~
\begin{enumerate}[label=(\alph{enumi})]
\item Let $\nu$ be a finite Borel measure on $\mathbb{R}^{k}$, assumed
normalized for simplicity, then a subset $\Lambda\subset\mathbb{R}^{k}$
is said to be a spectrum\emph{ }iff $\left\{ e^{i\lambda x}\right\} _{\lambda\in\Lambda}$
is an orthonormal basis (ONB) in $L^{2}\left(\nu\right)$. In this
case, we say that $\left(\nu,\Lambda\right)$ is a \emph{spectral
pair}.
\item If instead 
\[
\int_{\Omega}\left|f\left(x\right)\right|^{2}d\nu\left(x\right)=\sum_{\lambda\in\Lambda}\left|\int_{\mathbb{R}}f\left(x\right)e^{-i\lambda x}d\nu\left(x\right)\right|^{2}
\]
for all $f\in L^{2}\left(\nu\right)$, then we say that $\left(\nu,\Lambda\right)$
is a \emph{Parseval spectral pair}. Clearly a spectral pair is also
a Parseval spectral pair.
\item Given $\nu$, set 
\begin{equation}
\mathscr{L}\left(\nu\right):=\left\{ \Lambda\subset\mathbb{R}^{k}\mathrel{;}\left(\nu,\Lambda\right)\text{ is a spectral pair}\right\} ;\label{eq:d9}
\end{equation}
and given $\Lambda\in\mathscr{D}_{k}$, set 
\begin{equation}
\mathscr{M}\left(\Lambda\right):=\left\{ \nu\in\text{Prob}_{\mathscr{B}}\left(\mathbb{R}^{k}\right)\mathrel{;}\left(\nu,\Lambda\right)\:\text{is a spectral pair}\right\} .\label{eq:d10}
\end{equation}
\item If $\left(\nu,\Lambda\right)$ is a spectral pair, we say that $\nu\in\mathscr{M}\left(\Lambda\right)$,
and $\Lambda\in\mathscr{L}\left(\nu\right)$. If $\left(\nu,\Lambda\right)$
is a Parseval spectral pair, we say that $\Lambda\in\mathscr{L}_{\text{Pa}}\left(\nu\right)$,
and $\nu\in\mathscr{M}_{\text{Pa}}\left(\Lambda\right)$. 
\end{enumerate}
\end{defn}

\begin{rem}
For details on Parseval frames and their recent applications, we refer
to \cite{MR3048586,MR3318644,MR3421919,MR3606269,MR3702855,MR2735759,MR3329098,MR3622655}.

The case when $\nu\in\text{Prob}_{\mathscr{B}}\left(\mathbb{R}^{k}\right)$
is an IFS-Cantor measure has been studied extensively in the literature.
See, e.g, \cite{MR1215311,MR1655831,MR2509326,MR3055992,MR1008470,MR951745,MR1083085,MR2457304,MR2945156}.
\end{rem}
\begin{defn}[L. Schwartz \cite{MR0179587}]
\label{def:pd}A tempered distribution $F\in\mathcal{S}'$ is \emph{positive
definite} (p.d.) on $\mathbb{R}^{k}$ if 
\begin{equation}
\left\langle F\left(x-y\right),\varphi\otimes\overline{\varphi}\right\rangle \geq0,\;\forall\varphi\in\mathcal{S}_{k}.\label{eq:d1}
\end{equation}
where $\mathcal{S}_{k}:=$ the Schwartz space on $\mathbb{R}^{k}$. 
\end{defn}
\begin{defn}
A positive measure $\mu$ on $\mathbb{R}^{k}$ is said to be \emph{tempered}
iff (Def.) $\exists M\in\mathbb{N}$ such that 
\begin{equation}
\int_{\mathbb{R}^{k}}\frac{d\mu\left(\lambda\right)}{1+\left\Vert \lambda\right\Vert ^{2M}}<\infty.\label{eq:d2}
\end{equation}
\end{defn}
\begin{thm}[Schwartz \cite{MR0179587}]
\label{thm:d7} A tempered distribution $F$ is positive definite
if and only if there exists a positive tempered Borel measure $\mu$
on $\mathbb{R}^{k}$ such that $F=\widehat{\mu}$, more precisely,
\begin{equation}
\int_{\mathbb{R}^{k}}F\left(x\right)\varphi\left(x\right)dx=\int_{\mathbb{R}^{k}}\widehat{\varphi}\left(\lambda\right)d\mu\left(\lambda\right),\;\forall\varphi\in\mathcal{S}_{k}.\label{eq:d3}
\end{equation}
In particular (see (\ref{eq:d1})) the measure $\mu$ will satisfy:
\begin{equation}
\int_{\mathbb{R}^{k}}\int_{\mathbb{R}^{k}}F\left(x-y\right)\varphi\left(x\right)\overline{\varphi\left(y\right)}dxdy=\int_{\mathbb{R}^{k}}\left|\widehat{\varphi}\left(\lambda\right)\right|^{2}d\mu\left(\lambda\right)\label{eq:d4}
\end{equation}
with a slight abuse of notation. Here, ``$\widehat{\,\cdot\,}$''
denotes the standard Fourier transform. 
\end{thm}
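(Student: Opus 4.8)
This is the Bochner--Schwartz theorem, and the plan is to deduce it from the classical Bochner theorem recalled above by a regularization. The ``if'' direction is immediate: if $F=\widehat{\mu}$ with $\mu$ positive and tempered, then for $\varphi\in\mathcal{S}_{k}$ the function $|\widehat{\varphi}|^{2}$ decays faster than any power while $\mu$ obeys (\ref{eq:d2}), so Fubini gives $\langle F(x-y),\varphi\otimes\overline{\varphi}\rangle=\int_{\mathbb{R}^{k}}|\widehat{\varphi}(\lambda)|^{2}\,d\mu(\lambda)\ge0$, which is (\ref{eq:d4}); the same computation (then polarization and density) yields (\ref{eq:d3}). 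So the content is the ``only if'' direction.

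For that, fix a real even Gaussian $g\in\mathcal{S}_{k}$ with $g\ge0$ and $\int g=1$, so that $\widehat{g}$ is a strictly positive Gaussian, and set $g_{n}(x)=n^{k}g(nx)$; then $g_{n}\ast g_{n}\to\delta_{0}$ in $\mathcal{S}_{k}'$ and $\gamma_{n}:=\widehat{g_{n}\ast g_{n}}$ is a strictly positive Gaussian increasing pointwise to $1$. Put $F_{n}:=F\ast g_{n}\ast g_{n}\in C^{\infty}(\mathbb{R}^{k})$. A change of variables expresses the discrete positive-definiteness sum of $F_{n}$ as a distributional pairing: for finite systems $\{c_{i}\}\subset\mathbb{C}$, $\{x_{i}\}\subset\mathbb{R}^{k}$,
\begin{equation}
\sum\nolimits_{i}\sum\nolimits_{j}c_{i}\overline{c_{j}}\,F_{n}(x_{i}-x_{j})=\bigl\langle F(x-y),\Phi\otimes\overline{\Phi}\bigr\rangle\ge0,\qquad\Phi:=\sum\nolimits_{i}c_{i}\,g_{n}(x_{i}-\,\cdot\,)\in\mathcal{S}_{k},
\end{equation}
by (\ref{eq:d1}); hence each $F_{n}$ is a continuous (so bounded) positive definite function, and classical Bochner supplies a unique finite positive Borel measure $\mu_{n}$ with $F_{n}=\widehat{\mu_{n}}$.

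The next step is to glue the $\mu_{n}$. From $F_{n}\ast(g_{m}\ast g_{m})=F\ast g_{n}\ast g_{n}\ast g_{m}\ast g_{m}=F_{m}\ast(g_{n}\ast g_{n})$ and uniqueness in Bochner applied to this common continuous p.d.\ function, $\gamma_{m}\,d\mu_{n}=\gamma_{n}\,d\mu_{m}$, so $d\mu:=\gamma_{n}^{-1}\,d\mu_{n}$ is an index-independent positive Borel measure with $d\mu_{n}=\gamma_{n}\,d\mu$; since $0<\gamma_{n}\uparrow1$, this means $\mu_{n}\uparrow\mu$ setwise. As $F_{n}=\widehat{\mu_{n}}$ gives $\langle F_{n},\varphi\rangle=\int\widehat{\varphi}\,d\mu_{n}$ for all $\varphi\in\mathcal{S}_{k}$, we may let $n\to\infty$: the left side tends to $\langle F,\varphi\rangle$ because $g_{n}\ast g_{n}\to\delta_{0}$, and for $\varphi$ with $\widehat{\varphi}\ge0$ monotone convergence gives $\int\widehat{\varphi}\,d\mu$ on the right; hence $\int\widehat{\varphi}\,d\mu=\langle F,\varphi\rangle<\infty$ for all such $\varphi$.

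It remains to show $\mu$ is tempered and to promote the last identity to $F=\widehat{\mu}$, and here is where I expect the real work: $F\in\mathcal{S}_{k}'$ only furnishes a \emph{qualitative} bound $|\langle F,\varphi\rangle|\le C\,p_{N}(\varphi)$ by a Schwartz seminorm, which must be turned into a \emph{quantitative} decay rate for $\mu$. Since the Fourier transform is a topological automorphism of $\mathcal{S}_{k}$, this transports to $\int\chi\,d\mu\le C'\,q_{N}(\chi)$ for all $\chi\ge0$ in $\mathcal{S}_{k}$; applying it to $\chi_{R}(\lambda)=(1+\|\lambda\|^{2})^{-(N+k)}\,\eta(\lambda/R)$, where $\eta\in C_{c}^{\infty}$, $0\le\eta\le1$, $\eta\equiv1$ near $0$ --- for which the relevant seminorms $q_{N}(\chi_{R})$ stay bounded as $R\to\infty$ --- and letting $R\to\infty$ (Fatou) yields $\int(1+\|\lambda\|^{2})^{-(N+k)}\,d\mu<\infty$, i.e., $\mu$ is tempered (with $M=N+k$ in (\ref{eq:d2})). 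With temperedness in hand, $|\widehat{\varphi}|\le C_{\varphi}(1+\|\lambda\|^{2})^{-(N+k)}\in L^{1}(\mu)$ for every $\varphi\in\mathcal{S}_{k}$, so dominated convergence upgrades the previous limit from $\widehat{\varphi}\ge0$ to arbitrary $\varphi\in\mathcal{S}_{k}$, giving $\langle F,\varphi\rangle=\int\widehat{\varphi}\,d\mu$, which is (\ref{eq:d3}); then (\ref{eq:d4}) is the special case exactly as in the ``if'' direction, and uniqueness of $\mu$ follows from that of the $\mu_{n}$ via $d\mu_{n}=\gamma_{n}\,d\mu$.
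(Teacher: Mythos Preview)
The paper does not supply a proof of this theorem: it is stated with attribution to Schwartz and cited to \cite{MR0179587}, then used as a black box in what follows. So there is no ``paper's own proof'' to compare against.

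Your sketch is a correct and standard route to Bochner--Schwartz: regularize $F$ by convolution with a Gaussian approximate identity so that each $F_{n}$ is a genuine continuous positive definite function, invoke classical Bochner to get finite measures $\mu_{n}$, observe the compatibility $\gamma_{m}\,d\mu_{n}=\gamma_{n}\,d\mu_{m}$ to define a single positive measure $\mu$, and then use the temperedness of $F$ to control the growth of $\mu$ and pass to the limit. The one place to be a little more careful is the temperedness step: the bound $\int\chi\,d\mu\le C'\,q_{N}(\chi)$ you quote holds a priori only for those $\chi\ge0$ in $\mathcal{S}_{k}$ for which you have already identified $\int\chi\,d\mu$ with $\langle F,\check{\chi}\rangle$ (namely $\chi=\widehat{\varphi}\ge0$), so you should make explicit that your test functions $\chi_{R}$ fall into that class (they do, being nonnegative Schwartz functions) before invoking the seminorm estimate. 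With that clarified, the argument closes as you indicate. Two cosmetic remarks: in the ``if'' direction (\ref{eq:d3}) is immediate from the definition of $\widehat{\mu}$ as a distribution, so no polarization is needed; and the boundedness of $F_{n}$ that you need for classical Bochner follows from the pointwise p.d.\ property via $|F_{n}(x)|\le F_{n}(0)$, which you might state explicitly.
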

\begin{defn}
Let $F$ be a positive definite tempered distribution, then on the
functions 
\begin{equation}
\left(\varphi\ast F\right)\left(x\right)=\int_{\mathbb{R}^{k}}\varphi\left(y\right)F\left(x-y\right)dy,\;\varphi\in\mathcal{S}_{k},\label{eq:d5}
\end{equation}
set 
\begin{equation}
\left\langle \varphi\ast F,\psi\ast F\right\rangle _{\mathscr{H}_{F}}:=\int_{\mathbb{R}^{k}}\int_{\mathbb{R}^{k}}\varphi\left(x\right)\overline{\psi\left(y\right)}F\left(x-y\right)dxdy,\;\forall\varphi,\psi\in\mathcal{S}_{k}.\label{eq:d6}
\end{equation}
Then $\left\{ \varphi\ast F\mathrel{;}\varphi\in\mathcal{S}_{k}\right\} $
forms a pre-Hilbert space with respect to the norm specified in (\ref{eq:d6})
and (\ref{eq:d8}). Set 
\begin{equation}
\mathscr{H}_{F}:=\text{the Hilbert completion from }\left(\ref{eq:d5}\right)\:\&\:\left(\ref{eq:d6}\right).\label{eq:d7}
\end{equation}
\end{defn}
It follows that
\begin{lem}
A function $\varphi\ast F$ is in $\mathscr{H}_{F}$ if and only if
$\widehat{\varphi}\in L^{2}\left(\mu\right)$, and then 
\begin{equation}
\left\Vert \varphi\ast F\right\Vert _{\mathscr{H}_{F}}^{2}=\int_{\mathbb{R}^{k}}\left|\widehat{\varphi}\left(\lambda\right)\right|^{2}d\mu\left(\lambda\right)\label{eq:d8}
\end{equation}
where $\mu$ is the tempered measure from (\ref{eq:d3}).
\end{lem}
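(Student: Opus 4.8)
The plan is to realize $\mathscr{H}_{F}$ concretely as $L^{2}\left(\mu\right)$ through the Fourier transform, and then read off the asserted equivalence as a statement about which $\varphi$ produce elements of the resulting closure.

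First I would record the basic isometry. By Schwartz's theorem (Theorem \ref{thm:d7}) there is a positive tempered measure $\mu$ with $F=\widehat{\mu}$, and combining the definition (\ref{eq:d6}) of the inner product with (\ref{eq:d4}) gives, for all $\varphi,\psi\in\mathcal{S}_{k}$,
\[
\left\langle \varphi\ast F,\psi\ast F\right\rangle _{\mathscr{H}_{F}}=\int_{\mathbb{R}^{k}}\widehat{\varphi}\left(\lambda\right)\overline{\widehat{\psi}\left(\lambda\right)}\,d\mu\left(\lambda\right);
\]
note that $\widehat{\varphi}\in\mathcal{S}_{k}\subset L^{2}\left(\mu\right)$ automatically here, since $\mu$ is tempered. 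Hence $T_{F}\colon\varphi\ast F\longmapsto\widehat{\varphi}$ is well defined and isometric on the dense subspace $\left\{ \varphi\ast F\mathrel{;}\varphi\in\mathcal{S}_{k}\right\} $ of $\mathscr{H}_{F}$, and extends uniquely to a linear isometry $T_{F}\colon\mathscr{H}_{F}\hookrightarrow L^{2}\left(\mu\right)$, which already proves (\ref{eq:d8}) for $\varphi\in\mathcal{S}_{k}$. Moreover $T_{F}$ is onto: a tempered positive measure is locally finite, hence Radon, so $C_{c}^{\infty}\left(\mathbb{R}^{k}\right)$, hence also $\mathcal{S}_{k}$, is dense in $L^{2}\left(\mu\right)$, and $\left\{ \widehat{\varphi}\mathrel{;}\varphi\in\mathcal{S}_{k}\right\} =\mathcal{S}_{k}$ because the Fourier transform is a bijection of $\mathcal{S}_{k}$; so the range of $T_{F}$ is dense, and being closed it is all of $L^{2}\left(\mu\right)$, i.e. $\mathscr{H}_{F}\cong L^{2}\left(\mu\right)$ under $T_{F}$.

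Now for the equivalence itself, take $\varphi$ in the admissible class, i.e. a (tempered) distribution for which $\varphi\ast F$ is again a well-defined tempered distribution; the convolution theorem gives $\widehat{\varphi\ast F}=\widehat{\varphi}\,d\mu$ in the abuse of notation of Theorem \ref{thm:d7}. If $\widehat{\varphi}\in L^{2}\left(\mu\right)$, I would use density to pick $\varphi_{n}\in\mathcal{S}_{k}$ with $\widehat{\varphi_{n}}\to\widehat{\varphi}$ in $L^{2}\left(\mu\right)$; then $\left(\varphi_{n}\ast F\right)_{n}$ is $\left\Vert \cdot\right\Vert _{\mathscr{H}_{F}}$-Cauchy, so converges to some $h\in\mathscr{H}_{F}$ with $T_{F}h=\widehat{\varphi}$, and pairing with test functions and passing to the limit (together with $\widehat{\varphi_{n}\ast F}\to\widehat{\varphi}\,d\mu$) identifies the distribution determined by $h$ with $\varphi\ast F$; hence $\varphi\ast F\in\mathscr{H}_{F}$ and $\left\Vert \varphi\ast F\right\Vert _{\mathscr{H}_{F}}^{2}=\left\Vert T_{F}h\right\Vert _{L^{2}\left(\mu\right)}^{2}=\int_{\mathbb{R}^{k}}\left|\widehat{\varphi}\right|^{2}d\mu$, which is (\ref{eq:d8}). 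Conversely, if $\varphi\ast F\in\mathscr{H}_{F}$, then applying the isometry $T_{F}$ (and matching it on a Schwartz approximating sequence) yields $\widehat{\varphi}=T_{F}\left(\varphi\ast F\right)\in L^{2}\left(\mu\right)$, and the same isometry again gives (\ref{eq:d8}).

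The only genuinely delicate step is the identification carried out inside the abstract completion: one must know that elements of $\mathscr{H}_{F}$ are honest functions/distributions determined by their pairings against test functions (a pointwise reproducing-kernel estimate in the spirit of Theorem \ref{thm:HF}), and must keep the Fourier/convolution bookkeeping $\widehat{\varphi\ast F}=\widehat{\varphi}\,d\mu$ consistent with the definition of $T_{F}$, so that $\lim_{n}\varphi_{n}\ast F$ in $\mathscr{H}_{F}$ really is the distribution $\varphi\ast F$. The remaining ingredient, density of $\mathcal{S}_{k}$ in $L^{2}\left(\mu\right)$, is routine once one notes that a tempered positive measure is locally finite.
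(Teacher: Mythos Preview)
Your proof is correct and follows exactly the approach the paper has in mind: the paper states this lemma with no proof beyond the preceding ``It follows that,'' relying on the fact that the norm identity (\ref{eq:d8}) is immediate from the definition (\ref{eq:d6}) combined with Schwartz's formula (\ref{eq:d4}), and that the resulting isometry $T_{F}\colon\varphi\ast F\mapsto\widehat{\varphi}$ has dense range in $L^{2}(\mu)$. You have simply written out these details, together with the approximation argument needed to pass from $\varphi\in\mathcal{S}_{k}$ to the broader class of $\varphi$ implicit in the ``if and only if,'' and you correctly flag the one point requiring care (identifying the abstract completion limit with the concrete distribution $\varphi\ast F$).
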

\begin{lem}
If $\Lambda\in\mathscr{D}_{k}$, then 
\begin{equation}
F_{\Lambda}\left(x\right):=\sum_{\lambda\in\Lambda}e^{i\lambda x}\label{eq:d11}
\end{equation}
is a positive definite (p.d.) tempered distribution. 
\end{lem}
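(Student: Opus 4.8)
The plan is to recognize $F_{\Lambda}$ as the distributional Fourier transform of the atomic measure
\[
\mu_{\Lambda}:=\sum_{\lambda\in\Lambda}\delta_{\lambda}
\]
on $\mathbb{R}^{k}$, to check that $\mu_{\Lambda}$ is a \emph{positive tempered} measure, and then to quote \thmref{thm:d7}; equivalently, one can verify the defining inequality \eqref{eq:d1} by a direct computation. The single substantive point is the temperedness of $\mu_{\Lambda}$, which is exactly where the hypothesis $\Lambda\in\mathscr{D}_{k}$ enters: uniform discreteness is precisely what forces the counting function of $\Lambda$ to grow polynomially, and without such a restriction the formal series $\sum_{\lambda}e^{i\lambda x}$ need not define a tempered distribution at all.

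\emph{Step 1 (the packing estimate).} Since $\Lambda\in\mathscr{D}_{k}$, fix $r>0$ with $\left|\lambda-\lambda'\right|\ge r$ for distinct $\lambda,\lambda'\in\Lambda$, so the balls $B(\lambda,r/2)$, $\lambda\in\Lambda$, are pairwise disjoint. Comparing Lebesgue volumes inside $B(0,R+r/2)$ yields
\[
N(R):=\#\bigl(\Lambda\cap B(0,R)\bigr)\;\le\;\Bigl(1+\tfrac{2R}{r}\Bigr)^{k},
\]
i.e.\ the counting function grows at most like $R^{k}$. Splitting $\sum_{\lambda\in\Lambda}(1+\left\Vert\lambda\right\Vert^{2M})^{-1}$ over dyadic annuli $\{2^{n-1}\le\left\Vert\lambda\right\Vert<2^{n}\}$ and using this bound shows the sum is finite as soon as $2M>k$; hence $\mu_{\Lambda}$ satisfies \eqref{eq:d2} and is a positive tempered measure.

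\emph{Step 2 (well-definedness and positivity).} For $\varphi\in\mathcal{S}_{k}$ the Schwartz decay of $\widehat{\varphi}$ together with the polynomial bound on $N$ makes $\sum_{\lambda\in\Lambda}\widehat{\varphi}(\lambda)$ absolutely convergent; this is the action $\langle\widehat{\mu_{\Lambda}},\varphi\rangle$ in the sense of \eqref{eq:d3}, and unwinding it term by term gives $\langle F_{\Lambda},\varphi\rangle=\sum_{\lambda\in\Lambda}\int_{\mathbb{R}^{k}}e^{i\lambda x}\varphi(x)\,dx$, so $F_{\Lambda}=\widehat{\mu_{\Lambda}}$ is a genuine element of $\mathcal{S}_{k}'$. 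Positive definiteness is then immediate from \thmref{thm:d7}. Alternatively, one computes directly, for $\varphi\in\mathcal{S}_{k}$,
\[
\bigl\langle F_{\Lambda}(x-y),\,\varphi\otimes\overline{\varphi}\bigr\rangle
=\sum_{\lambda\in\Lambda}\Bigl(\int_{\mathbb{R}^{k}}e^{i\lambda x}\varphi(x)\,dx\Bigr)\overline{\Bigl(\int_{\mathbb{R}^{k}}e^{i\lambda y}\varphi(y)\,dy\Bigr)}
=\sum_{\lambda\in\Lambda}\Bigl|\int_{\mathbb{R}^{k}}e^{i\lambda x}\varphi(x)\,dx\Bigr|^{2}\;\ge\;0,
\]
where the interchange of the double integral with the sum over $\Lambda$ is licensed by the estimate of Step 1 and the rapid decay of $\widehat{\varphi}$.

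\emph{Main obstacle.} Everything in Step 2 is routine; the real content is Step 1, i.e.\ converting uniform discreteness into the polynomial growth bound $N(R)\lesssim R^{k}$ and thence into the temperedness condition \eqref{eq:d2}. Once that estimate is available, the statement follows either by invoking Schwartz's representation theorem or by the one-line positivity computation above.
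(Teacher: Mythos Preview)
Your argument is correct and follows essentially the same route as the paper's own proof: both reduce the claim to the temperedness estimate $\sum_{\lambda\in\Lambda}(1+|\lambda|^{2M})^{-1}<\infty$ for some $M$ depending on $k$ and the separation constant $r$, and both identify $F_{\Lambda}$ with $\widehat{\mu_{\Lambda}}$ so that positive definiteness follows from Schwartz's theorem (\thmref{thm:d7}). The only differences are cosmetic: you supply the packing/volume argument for the counting bound $N(R)\lesssim R^{k}$ and the explicit positivity computation, whereas the paper asserts the existence of $M$ without detail and bounds $\langle F_{\Lambda},\varphi\rangle$ via the identity $\widehat{\varphi}(\lambda)=\bigl((I-\Delta_{k}^{M})\varphi\bigr)^{\wedge}(\lambda)\big/(1+|\lambda|^{2M})$ to exhibit continuity against a specific Schwartz seminorm.
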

\begin{proof}
Let $\varphi\in\mathcal{S}_{k}$, and let $r>0$ be as in Definition
\ref{def:ud}. Then 
\[
\left\langle F_{\Lambda},\varphi\right\rangle =\int_{\mathbb{R}^{k}}\sum\nolimits _{\lambda\in\Lambda}e^{i\lambda x}\varphi\left(x\right)dx
\]
where $dx=dx_{1}\cdots dx_{k}$ is the usual $k$-Lebesgue measure
on $\mathbb{R}^{k}$; and so 
\begin{equation}
\left\langle F_{\Lambda},\varphi\right\rangle =\sum_{\lambda\in\Lambda}\widehat{\varphi}\left(\lambda\right).\label{eq:dd1}
\end{equation}
Set $\left|\lambda\right|=\left(\lambda_{1}^{2}+\cdots+\lambda_{k}^{2}\right)^{\frac{1}{2}}$.
From the assumption in the lemma; see Definition \ref{def:ud}, it
follows that there is an $M\in\mathbb{N}$, depending on $k$ and
$r$ such that 
\begin{equation}
C_{M}\left(\Lambda\right):=\sum_{\lambda\in\Lambda}\frac{1}{1+\left|\lambda\right|^{2M}}<\infty.\label{eq:dd2}
\end{equation}
Let $\Delta_{k}$ be the usual Laplacian on $\mathbb{R}^{k}$, then
for $\varphi\in\mathcal{S}_{k}$ (the space of Schwartz test functions),
we get 
\begin{equation}
\widehat{\varphi}\left(\lambda\right)=\left(\left(I-\Delta_{k}^{M}\right)\varphi\right)^{\wedge}\left(\lambda\right)\big/(1+\left|\lambda\right|^{2M}),\;\forall\lambda\in\mathbb{R}^{k}.\label{eq:dd3}
\end{equation}
Again, using $\varphi\in\mathcal{S}_{k}$, 
\begin{equation}
A\left(\varphi\right):=\sup_{\lambda\in\mathbb{R}^{k}}\left|\left(\left(I-\Delta_{k}^{M}\right)\varphi\right)^{\wedge}\left(\lambda\right)\right|<\infty.\label{eq:dd4}
\end{equation}
Combining the estimates (\ref{eq:dd2}) \& (\ref{eq:dd4}), we get
\[
\left|\left\langle F_{\Lambda},\varphi\right\rangle \right|\leq A\left(\varphi\right)C_{M}\left(\Lambda\right).
\]
Since $A\left(\varphi\right)$ is one of the seminorms given in the
definition of the topology on Schwartz' space of tempered test functions
$\mathcal{S}_{k}$, the desired conclusion follows. 
\end{proof}
\begin{example}
Let $\nu=\nu_{4}$ be the $\nicefrac{1}{4}$-Cantor measure, i.e.,
the unique solution to the IFS-equation in $\text{Prob}_{\mathscr{B}}\left(\mathbb{R}\right)$
\begin{equation}
\frac{1}{2}\int_{\mathbb{R}}\left(f\left(\frac{x}{4}\right)+f\left(\frac{x+2}{4}\right)\right)d\nu_{4}\left(x\right)=\int_{\mathbb{R}}f\left(x\right)d\nu_{4}\left(x\right),\label{eq:d12}
\end{equation}
and let 
\begin{align}
\Lambda_{4} & =\left\{ 0,1,4,5,16,17,20,21,64,65,\cdots\right\} \label{eq:d13}\\
 & =\left\{ \sum\nolimits _{0}^{\text{finite}}b_{i}4^{i}\mathrel{;}b_{i}\in\left\{ 0,1\right\} \right\} .\nonumber 
\end{align}
Then $\left(\nu_{4},\Lambda_{4}\right)$ is a spectral pair; and so
$\Lambda_{4}\in\mathscr{L}\left(\nu_{4}\right)$, and $\nu_{4}\in\mathscr{M}\left(\Lambda_{4}\right)$.
See Figure \ref{fig:sm}.

It is known that $\left(\nu_{3},\Lambda_{3}\right)$ is \emph{not}
a spectral pair, where $\nu_{3}$ is the unique solution in $\text{Prob}_{\mathscr{B}}\left(\mathbb{R}\right)$
to 
\begin{equation}
\frac{1}{2}\int_{\mathbb{R}}\left(f\left(\frac{x}{3}\right)+f\left(\frac{x+2}{3}\right)\right)d\nu_{3}\left(x\right)=\int_{\mathbb{R}}f\left(x\right)d\nu_{3}\left(x\right),\label{eq:d14}
\end{equation}
and 
\begin{align}
\Lambda_{3} & :=\left\{ 0,1,3,4,9,10,12,13,27,28,\cdots\right\} \label{eq:d15}\\
 & =\left\{ \sum\nolimits _{0}^{\text{finite}}b_{i}3^{i}\mathrel{;}b_{i}\in\left\{ 0,1\right\} \right\} .\nonumber 
\end{align}
In fact, Jorgensen \& Pedersen \cite{MR1655831} showed that 
\begin{equation}
\mathscr{L}\left(\nu_{3}\right)=\emptyset.\label{eq:d16}
\end{equation}
However, $\mathscr{M}\left(\Lambda_{3}\right)$ is not known. See
also \cite{MR1629847,MR1655832,MR1785282,MR2921088}.
\end{example}
\begin{figure}[H]
\begin{tabular}{>{\centering}p{0.45\paperwidth}}
\includegraphics[height=0.15\paperheight]{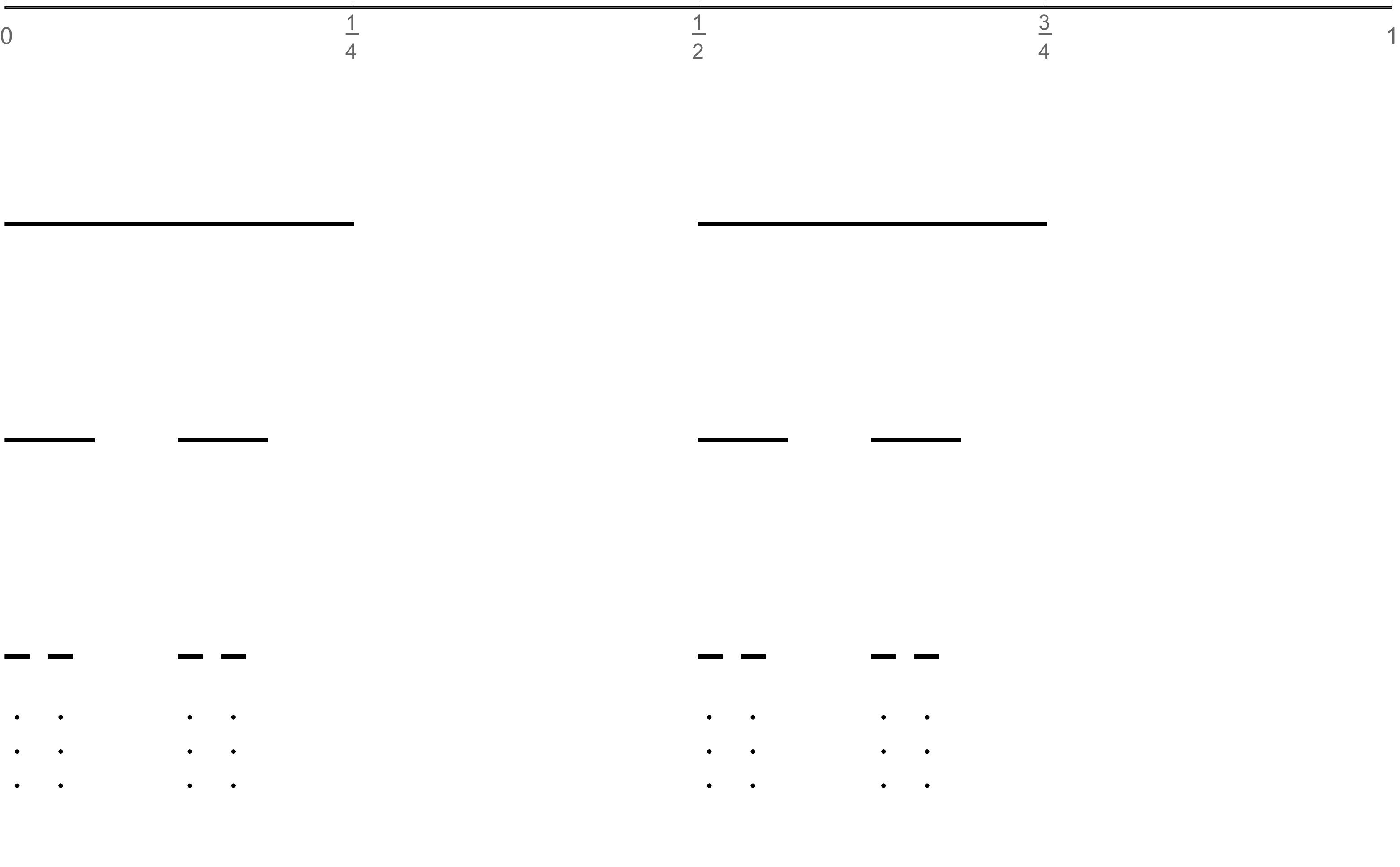}\tabularnewline
Suport of $\nu_{4}$; note $\nu_{4}$ makes a spectral pair $\left(\nu_{4},\Lambda_{4}\right)$\vspace{1em}\tabularnewline
\includegraphics[height=0.15\paperheight]{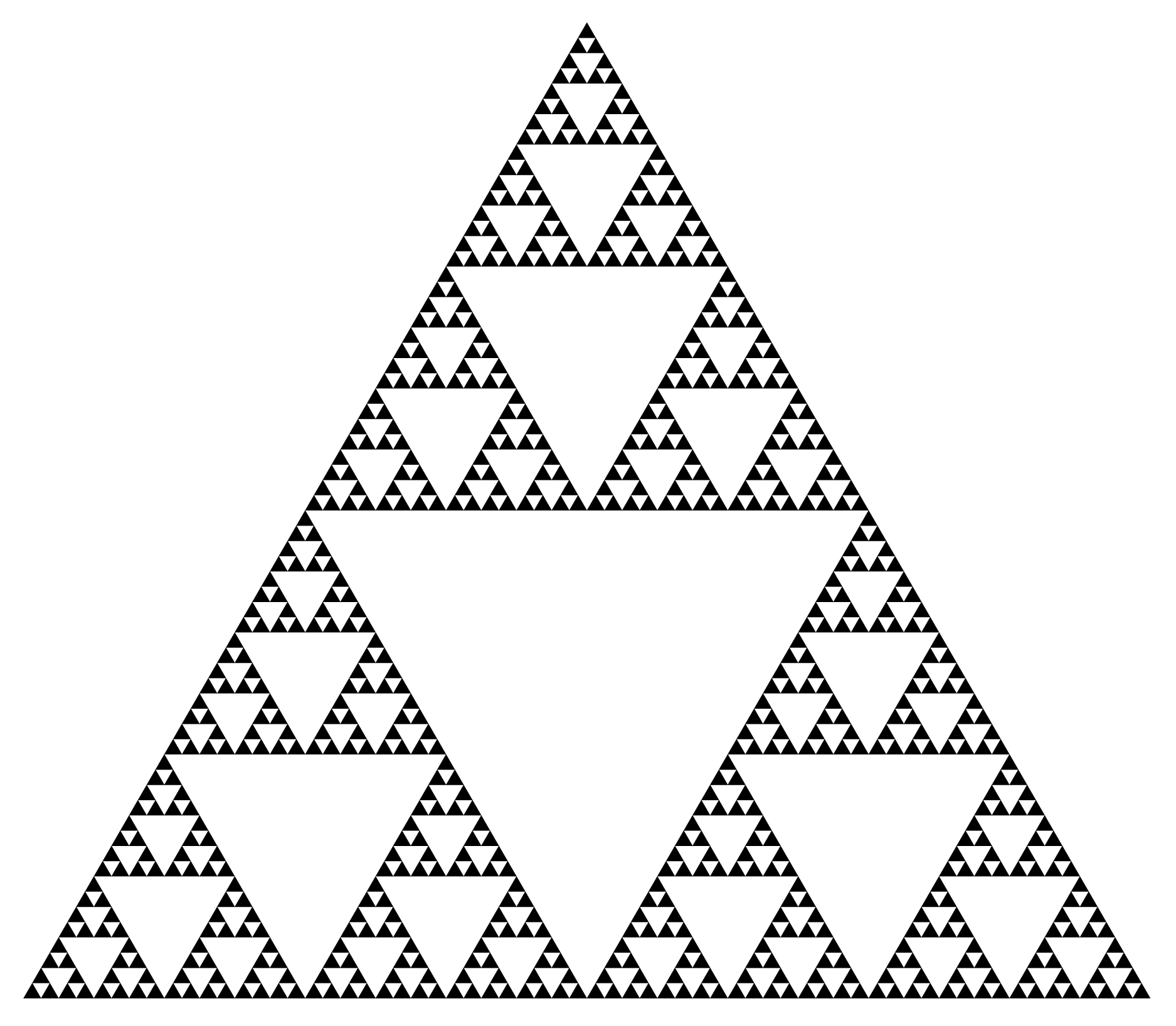}\tabularnewline
The Sierpinski gasket; the corresponding IFS measure is \emph{not}
part of a spectral pair.\vspace{1em}\tabularnewline
\includegraphics[height=0.25\paperheight]{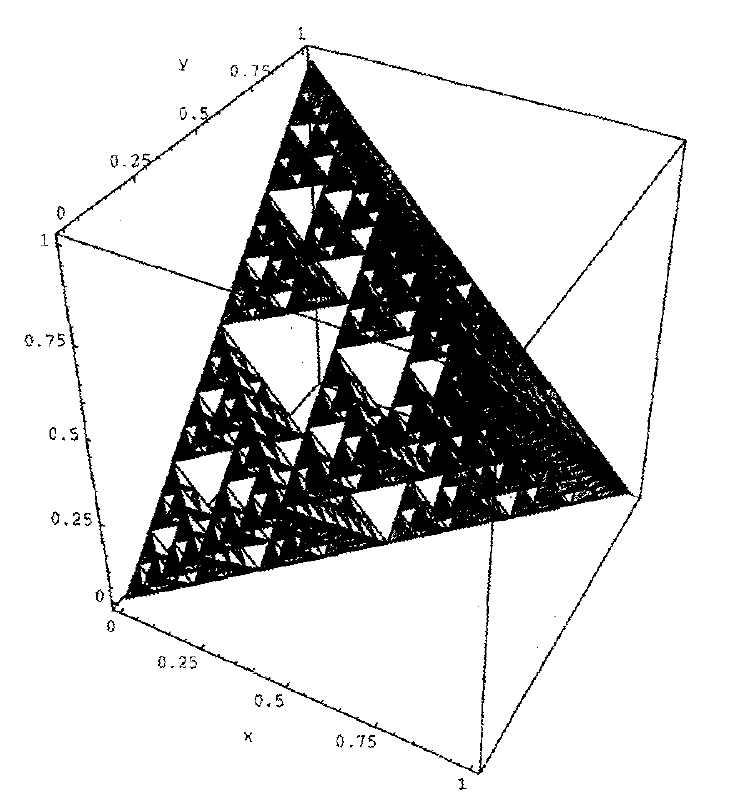}\tabularnewline
The Sierpinski Eiffle Tower; the corresponding IFS measure $\nu$
is part of a spectral pair $\left(\nu,\Lambda\right)$ with some $\Lambda\in\mathscr{D}_{3}$;
see \cite{MR1655831}, and also \cite{MR1629847,MR1655832,MR1785282,MR2921088}.\tabularnewline
\end{tabular}

\caption{\label{fig:sm}Examples of support sets in $\mathbb{R}^{k}$, $k=1,2,3$,
for IFS Cantor measures. See \cite{MR1215311}.}
\end{figure}

\begin{thm}
\label{thm:d12}Let $k\in\mathbb{N}$, and $\Lambda\in\mathscr{D}_{k}$
(see Definition \ref{def:ud}), and set $F_{\Lambda}\left(x\right)=\sum_{\lambda\in\Lambda}e^{i\lambda\cdot x}$,
$x\in\mathbb{R}^{k}$ as a tempered p.d. distribution, and let $\mathscr{H}_{F_{\Lambda}}$
be the generalized RKHS of Schwartz \cite{MR0179587}. Then a function
$h$ on $\mathbb{R}^{k}$ is in $\mathscr{H}_{F_{\Lambda}}$ if and
only if it has a convolution-factorization 
\begin{equation}
h=\varphi\ast F_{\Lambda}\label{eq:da1}
\end{equation}
where $\varphi$ is a measurable function such that $\widehat{\varphi}\left(\lambda\right)$
exists for all $\lambda\in\Lambda$, and $\left\{ \widehat{\varphi}\left(\lambda\right)\right\} _{\lambda\in\Lambda}$
is in $l^{2}\left(\Lambda\right)$. In this case 
\begin{equation}
\left\Vert h\right\Vert _{\mathscr{H}_{F_{\Lambda}}}^{2}=\sum_{\lambda\in\Lambda}\left|\widehat{\varphi}\left(\lambda\right)\right|^{2};\label{eq:da2}
\end{equation}
see (\ref{eq:da1}).
\end{thm}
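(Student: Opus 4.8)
The plan is to let everything flow from the Schwartz--Bochner measure of $F_{\Lambda}$ (Theorem \ref{thm:d7}) together with the characterization of $\mathscr{H}_{F}$ by the condition $\widehat{\varphi}\in L^{2}(\mu)$ and the norm identity (\ref{eq:d8}). The first step is to identify the measure: from the computation $\langle F_{\Lambda},\varphi\rangle=\sum_{\lambda\in\Lambda}\widehat{\varphi}(\lambda)$ in (\ref{eq:dd1}) and the defining relation (\ref{eq:d3}) one reads off that the tempered measure attached to $F_{\Lambda}$ is the counting measure $\mu_{\Lambda}:=\sum_{\lambda\in\Lambda}\delta_{\lambda}$, which is tempered precisely because $\Lambda\in\mathscr{D}_{k}$ (cf. (\ref{eq:dd2})). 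Since $\mu_{\Lambda}$ is purely atomic with unit mass at each point of $\Lambda$, restriction $f\mapsto\{f(\lambda)\}_{\lambda\in\Lambda}$ is a canonical unitary $L^{2}(\mu_{\Lambda})\cong l^{2}(\Lambda)$, under which the isometry of the preceding Lemma becomes a well-defined isometry $T:\mathscr{H}_{F_{\Lambda}}\rightarrow l^{2}(\Lambda)$ with $T(\varphi\ast F_{\Lambda})=\{\widehat{\varphi}(\lambda)\}_{\lambda\in\Lambda}$, and (\ref{eq:d8}) reads $\Vert\varphi\ast F_{\Lambda}\Vert_{\mathscr{H}_{F_{\Lambda}}}^{2}=\sum_{\lambda\in\Lambda}|\widehat{\varphi}(\lambda)|^{2}$. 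Consequently every $h\in\mathscr{H}_{F_{\Lambda}}$ is a $\mathscr{H}_{F_{\Lambda}}$-limit of some $\varphi_{n}\ast F_{\Lambda}$, and applying the Fourier transform (using that $\{\widehat{\varphi_{n}}(\lambda)\}_{\lambda}$ is Cauchy in $l^{2}(\Lambda)$) shows $h$ is canonically the tempered distribution $\sum_{\lambda\in\Lambda}c_{\lambda}e^{i\lambda\cdot x}$ with $c=Th\in l^{2}(\Lambda)$; this is the precise sense in which ``$h$ is a function on $\mathbb{R}^{k}$''. The forward implication and the norm formula (\ref{eq:da2}) are immediate from this.

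For the converse it remains to show $T$ is onto $l^{2}(\Lambda)$, i.e. that every $c\in l^{2}(\Lambda)$ has the form $\{\widehat{\varphi}(\lambda)\}_{\lambda\in\Lambda}$ for some measurable $\varphi$; then $h:=\varphi\ast F_{\Lambda}$ is the required factorization, because $Th=c$ together with injectivity of $T$ forces $\varphi\ast F_{\Lambda}$ to coincide with the given element of $\mathscr{H}_{F_{\Lambda}}$. This is where uniform discreteness enters. Fix $r>0$ as in Definition \ref{def:ud} and pick $g\in\mathcal{S}_{k}$ with $\widehat{g}\in C_{c}^{\infty}(\mathbb{R}^{k})$, $\mathrm{supp}\,\widehat{g}\subset\{\xi:|\xi|<r/2\}$ and $\widehat{g}(0)=1$. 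With $g_{\lambda}(x):=e^{i\lambda\cdot x}g(x)$ one has $\widehat{g_{\lambda}}=\widehat{g}(\cdot-\lambda)$, and these translates have pairwise essentially disjoint supports since $\Lambda$ is $r$-separated. Hence $\varphi:=\sum_{\lambda\in\Lambda}c_{\lambda}g_{\lambda}$ has $\widehat{\varphi}=\sum_{\lambda}c_{\lambda}\widehat{g}(\cdot-\lambda)$ with $\Vert\widehat{\varphi}\Vert_{L^{2}}^{2}=\Vert c\Vert_{l^{2}}^{2}\Vert\widehat{g}\Vert_{L^{2}}^{2}<\infty$; so the series converges in $L^{2}(\mathbb{R}^{k})$, $\varphi$ is a genuine measurable function, $\widehat{\varphi}$ is continuous (locally a finite sum of disjoint bumps), and $\widehat{\varphi}(\lambda)=c_{\lambda}\widehat{g}(0)=c_{\lambda}$ for all $\lambda\in\Lambda$. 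Finally $\varphi\ast F_{\Lambda}$ makes literal sense, its Fourier transform being the tempered measure $\widehat{\varphi}\cdot\mu_{\Lambda}=\sum_{\lambda}c_{\lambda}\delta_{\lambda}$, it lies in $\mathscr{H}_{F_{\Lambda}}$ by the characterization above since $\{\widehat{\varphi}(\lambda)\}\in l^{2}(\Lambda)$, and it has norm $\big(\sum_{\lambda}|c_{\lambda}|^{2}\big)^{1/2}$, which is (\ref{eq:da2}).

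I expect the only real obstacle to be this surjectivity step: one must manufacture a bona fide measurable $\varphi$ (not merely a tempered distribution) carrying prescribed $l^{2}$ data on $\Lambda$, and one must stay careful about the meaning of the ``function'' $h$ and of $\varphi\ast F_{\Lambda}$ once $\varphi$ leaves $C_{c}^{\infty}$. The hypothesis $\Lambda\in\mathscr{D}_{k}$ is exactly what trivializes the Bessel estimate making the disjoint-bump construction work, so no finer interpolation theory is needed. Everything else — pinning down $\mu_{\Lambda}$, unwinding $L^{2}(\mu_{\Lambda})\cong l^{2}(\Lambda)$, and quoting Theorem \ref{thm:d7} and the Lemma behind (\ref{eq:d8}) — is routine bookkeeping.
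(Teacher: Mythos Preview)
Your argument is correct and rests on the same backbone as the paper's: identify the Bochner--Schwartz measure of $F_{\Lambda}$ as the counting measure $\mu_{\Lambda}=\sum_{\lambda\in\Lambda}\delta_{\lambda}$ via (\ref{eq:dd1}) and (\ref{eq:d3}), and then read off the key norm identity $\Vert\varphi\ast F_{\Lambda}\Vert_{\mathscr{H}_{F_{\Lambda}}}^{2}=\sum_{\lambda\in\Lambda}|\widehat{\varphi}(\lambda)|^{2}$ from Theorem~\ref{thm:d7} and (\ref{eq:d8}). The paper's own proof is only a sketch that records exactly this formula and defers the remaining details to the proof of Theorem~\ref{thm:d3}, where the computations (\ref{eq:d18})--(\ref{eq:d19}) appear.

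Where you go beyond the paper is the surjectivity step: you explicitly manufacture, for arbitrary $c\in l^{2}(\Lambda)$, a measurable $\varphi\in L^{2}(\mathbb{R}^{k})$ with $\widehat{\varphi}(\lambda)=c_{\lambda}$ via the disjoint-bump interpolation $\varphi=\sum_{\lambda}c_{\lambda}\,e^{i\lambda\cdot x}g(x)$ with $\widehat{g}$ supported in a ball of radius $r/2$. The paper does not spell this out at all; it simply asserts the characterization and relies on the completion picture in (\ref{eq:d7}) together with the density of $\{\varphi\ast F_{\Lambda}:\varphi\in\mathcal{S}_{k}\}$. Your construction is a clean, self-contained way to close that gap, and it makes transparent exactly where the uniform-discreteness hypothesis $\Lambda\in\mathscr{D}_{k}$ is used (to get pairwise disjoint supports, hence orthogonality in $L^{2}$ and well-defined pointwise values of $\widehat{\varphi}$ on $\Lambda$). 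So: same route, but you have filled in the one nontrivial step the paper leaves implicit.
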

\begin{proof}
We shall include below only a sketch; additional details will follow
inside the proof of Theorem \ref{thm:d3} below. The key formula for
$\left\Vert \cdot\right\Vert _{\mathscr{H}_{F_{\Lambda}}}$ is 
\begin{equation}
\left\Vert \varphi\ast F_{\Lambda}\right\Vert _{\mathscr{H}_{F_{\Lambda}}}^{2}=\sum_{\lambda\in\Lambda}\left|\widehat{\varphi}\left(\lambda\right)\right|^{2},\label{eq:da3}
\end{equation}
which in turn follows from Theorem \ref{thm:d7} above.
\end{proof}
\begin{rem}
The tempered distributions $F_{\Lambda}$ considered here play an
important role in the literature on aperiodic phenomena, especially
the study of diffraction patterns; see e.g., \cite{MR3667579,MR3485709,MR3493304,MR3600642}.
\end{rem}
\begin{thm}
\label{thm:d3}Let $\Lambda\in\mathscr{D}_{k}$, and set 
\begin{equation}
F_{\Lambda}\left(x\right):=\sum_{\lambda\in\Lambda}e^{i\lambda x},\label{eq:d17}
\end{equation}
see (\ref{eq:d11}), then $\mathscr{H}_{F_{\Lambda}}$ (eqns. (\ref{eq:d5})-(\ref{eq:d6}))
has the form $\mathscr{H}_{F_{\Lambda}}=L^{2}\left(\nu\right)$ for
a finite positive Borel measure $\nu$ on $\mathbb{R}^{k}$ if and
only if $\nu\in\mathscr{M}_{\text{Pa}}\left(\Lambda\right)$. 
\end{thm}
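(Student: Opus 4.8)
The plan is to exploit the fact that $\mathscr{H}_{F_\Lambda}$ carries a canonical orthonormal basis consisting of the exponentials themselves. By Theorem~\ref{thm:d7} applied to $F_\Lambda$, the associated tempered measure is the counting measure $\mu_\Lambda:=\sum_{\lambda\in\Lambda}\delta_\lambda$ (since $\widehat{\delta_\lambda}=e^{i\lambda\cdot}$), so $L^2(\mu_\Lambda)=\ell^{2}(\Lambda)$ and $T_{F_\Lambda}(\varphi\ast F_\Lambda)=\widehat\varphi|_{\Lambda}$ is isometric into $\ell^{2}(\Lambda)$ (Theorem~\ref{thm:p1}(b)). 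The first step is to note that each $e_{\lambda_0}:=e^{i\lambda_0\cdot}$ lies in $\mathscr{H}_{F_\Lambda}$: by uniform discreteness (Definition~\ref{def:ud}) one may choose $\varphi\in\mathcal{S}_{k}$ with $\widehat\varphi(\lambda_0)=1$ and $\widehat\varphi(\lambda)=0$ for $\lambda\in\Lambda\setminus\{\lambda_0\}$, and then $\varphi\ast F_\Lambda=\sum_{\lambda\in\Lambda}\widehat\varphi(\lambda)e^{i\lambda\cdot}=e_{\lambda_0}$ with $T_{F_\Lambda}e_{\lambda_0}=\delta_{\lambda_0}$, the $\lambda_0$-th standard basis vector of $\ell^{2}(\Lambda)$. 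Since all finitely supported sequences are attained in the same way, $T_{F_\Lambda}$ has dense range, hence is a unitary $\mathscr{H}_{F_\Lambda}\xrightarrow{\sim}\ell^{2}(\Lambda)$, and $\{e_\lambda\}_{\lambda\in\Lambda}$ is an orthonormal basis of $\mathscr{H}_{F_\Lambda}$. Consequently every $h\in\mathscr{H}_{F_\Lambda}$ is the function $h=\sum_{\lambda}\langle h,e_\lambda\rangle_{\mathscr{H}_{F_\Lambda}}\,e^{i\lambda\cdot}$ with $\|h\|_{\mathscr{H}_{F_\Lambda}}^{2}=\sum_{\lambda}|\langle h,e_\lambda\rangle_{\mathscr{H}_{F_\Lambda}}|^{2}$; this recasts Theorem~\ref{thm:d12}.

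Next, I read the assertion ``$\mathscr{H}_{F_\Lambda}=L^2(\nu)$'' as the statement that the natural restriction map $J$, sending a function $h$ on $\mathbb{R}^k$ to its class in $L^2(\nu)$, is a unitary of $\mathscr{H}_{F_\Lambda}$ onto $L^2(\nu)$ (equivalently, when $\nu$ is merely finite, a surjective co-isometry). The point is that $Je_\lambda$ is exactly the exponential $e^{i\lambda\cdot}\in L^2(\nu)$ entering the definition of a (Parseval) spectral pair.

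For ``$\Leftarrow$'': suppose $\nu\in\mathscr{M}_{\text{Pa}}(\Lambda)$, so $\{e^{i\lambda\cdot}\}_{\lambda\in\Lambda}$ is a Parseval frame for $L^2(\nu)$, in particular Bessel; then $J\bigl(\sum_\lambda c_\lambda e_\lambda\bigr)=\sum_\lambda c_\lambda e^{i\lambda\cdot}$ (sum convergent in $L^2(\nu)$) defines a bounded operator $\mathscr{H}_{F_\Lambda}\to L^2(\nu)$. From $\langle J^{*}g,e_\lambda\rangle_{\mathscr{H}_{F_\Lambda}}=\langle g,e^{i\lambda\cdot}\rangle_{L^2(\nu)}$ one gets $J^{*}g=\sum_\lambda\langle g,e^{i\lambda\cdot}\rangle_{L^2(\nu)}e_\lambda$, hence $\|J^{*}g\|_{\mathscr{H}_{F_\Lambda}}^{2}=\sum_\lambda|\langle g,e^{i\lambda\cdot}\rangle_{L^2(\nu)}|^{2}=\|g\|_{L^2(\nu)}^{2}$ by the Parseval identity; thus $J$ is a co-isometry, onto because $\{e^{i\lambda\cdot}\}$ is total in $L^2(\nu)$. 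If $\nu$ is normalized then each $e^{i\lambda\cdot}$ has norm $1$ in $L^2(\nu)$, a Parseval frame of unit vectors is an orthonormal basis, and $J$ then carries an ONB to an ONB, i.e.\ is unitary. For ``$\Rightarrow$'': assuming $\mathscr{H}_{F_\Lambda}=L^2(\nu)$ via such a $J$, the image $\{Je_\lambda\}_{\lambda\in\Lambda}=\{e^{i\lambda\cdot}\}_{\lambda\in\Lambda}$ of the orthonormal basis $\{e_\lambda\}$ of $\mathscr{H}_{F_\Lambda}$ under the unitary (resp.\ surjective co-isometry) $J$ is an orthonormal basis (resp.\ a Parseval frame) of $L^2(\nu)$; in either case $\|g\|_{L^2(\nu)}^{2}=\sum_\lambda|\langle g,e^{i\lambda\cdot}\rangle_{L^2(\nu)}|^{2}$ for all $g\in L^2(\nu)$, which is exactly $\nu\in\mathscr{M}_{\text{Pa}}(\Lambda)$. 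When $\nu\in\text{Prob}_{\mathscr{B}}(\mathbb{R}^{k})$ the unit-vector remark shows $\mathscr{M}_{\text{Pa}}(\Lambda)=\mathscr{M}(\Lambda)$, so the criterion is equivalently ``$(\nu,\Lambda)$ is a spectral pair'', as in the introduction.

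The frame-theoretic facts used here — the image of an orthonormal basis under a co-isometry is a Parseval frame, and a Parseval frame of unit vectors is an orthonormal basis — are immediate. The real obstacles are two. First, the construction of the interpolating $\varphi$ and, more substantively, the assertion underlying Theorem~\ref{thm:d12} that elements of the abstract Hilbert completion $\mathscr{H}_{F_\Lambda}$ are honest functions on $\mathbb{R}^k$ to which $J$ applies: since $\sum_\lambda c_\lambda e^{i\lambda\cdot}$ with $\{c_\lambda\}\in\ell^{2}(\Lambda)$ need not converge pointwise, one must realize $\mathscr{H}_{F_\Lambda}$ inside $\mathcal{S}_{k}'$ via injectivity of the Fourier transform on $\Lambda$-supported tempered measures (as in Theorems~\ref{thm:d7} and~\ref{thm:d12}) and check that $\nu$-restriction is well defined there. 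Second, one must pin down exactly what ``$\mathscr{H}_{F_\Lambda}=L^2(\nu)$'' means and confirm that $J$ is the intended identification; once those two points are in place, the argument above is bookkeeping.
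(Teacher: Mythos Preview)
Your argument is correct and rests on the same core identification as the paper's: $\{e_\lambda\}_{\lambda\in\Lambda}$ is an orthonormal basis of $\mathscr{H}_{F_\Lambda}$, equivalently $T_{F_\Lambda}:\mathscr{H}_{F_\Lambda}\to\ell^{2}(\Lambda)$ is unitary, which is exactly the content of (\ref{eq:d18})--(\ref{eq:d19}) and Theorem~\ref{thm:d12}. The execution differs in flavor. The paper verifies the Parseval identity (\ref{eq:d25}) by a direct chain of equalities, shuttling between the $\mathscr{H}_{F_\Lambda}$- and $L^{2}(\nu)$-inner products via the representation $f=\sum_\lambda\widehat\varphi(\lambda)e_\lambda$; you instead package the identification as an operator $J$ and invoke the clean frame-theoretic dictionary ``image of an ONB under a surjective co-isometry $=$ Parseval frame'' (and its converse via $J^{*}$). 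Your route is more structural and arguably more transparent about what is actually being asserted. You also make explicit two points the paper leaves implicit: the construction of an interpolating $\varphi\in\mathcal{S}_k$ with $\widehat\varphi|_\Lambda=\delta_{\lambda_0}$ (possible by uniform discreteness, e.g.\ by choosing $\widehat\varphi$ to be a bump supported in a ball of radius $r/2$ about $\lambda_0$), and the need to fix a precise meaning for ``$\mathscr{H}_{F_\Lambda}=L^{2}(\nu)$''. The paper's proof tacitly assumes the same reading you give --- that the identity map on functions is the intended isometry --- and your caveat that elements of $\mathscr{H}_{F_\Lambda}$ must first be realized as tempered distributions before restriction to $\text{supp}(\nu)$ makes sense is a genuine technical point that neither proof fully resolves.
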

\begin{proof}
For $\varphi\in\mathcal{S}_{k}$, let $\widehat{\varphi}$ denote
the standard Fourier transform. It is known \cite{MR0185423} that
$\widehat{\mathcal{S}}_{k}=\mathcal{S}_{k}$, and so $\widehat{\mathcal{S}}_{k}'=\mathcal{S}_{k}'$.
One checks that 
\begin{align}
\left(\varphi\ast F_{\Lambda}\right)\left(x\right) & =\sum_{\lambda\in\Lambda}\widehat{\varphi}\left(\lambda\right)e^{i\lambda x},\;x\in\mathbb{R}^{k},\quad\text{and}\label{eq:d18}\\
\left\Vert \varphi\ast F_{\Lambda}\right\Vert _{\mathscr{H}_{F_{\Lambda}}}^{2} & =\sum_{\lambda\in\Lambda}\left|\widehat{\varphi}\left(\lambda\right)\right|^{2};\label{eq:d19}
\end{align}
see (\ref{eq:d5})-(\ref{eq:d6}) for definitions. Note that the transforms
are computed in the sense of Schwartz distributions.

We now turn to the proof of the theorem. Note that by Definition \ref{def:ud},
given $\Lambda$, then $\nu\in\mathscr{M}\left(\Lambda\right)$ iff
$L^{2}\left(\nu\right)$ admits the representation: $f\in L^{2}\left(\nu\right)\Longleftrightarrow$
$\exists\left\{ c_{\lambda}\right\} _{\lambda\in\Lambda}\in l^{2}\left(\Lambda\right)$
such that 
\begin{gather}
f\left(x\right)=\sum_{\lambda\in\Lambda}c_{\lambda}e^{i\lambda x},\quad\text{and}\label{eq:d20}\\
\int_{\mathbb{R}^{k}}\left|f\left(x\right)\right|^{2}d\nu\left(x\right)=\sum_{\lambda\in\Lambda}\left|c_{\lambda}\right|^{2},\label{eq:d21}
\end{gather}
where, for simplicity, we assume that $\nu$ is normalized. The interpretation
of (\ref{eq:d20}) is that of $L^{2}\left(\nu\right)$, and so when
$f\in L^{2}\left(\nu\right)$ has the representation (\ref{eq:d20}),
then we have convergence
\begin{equation}
\int\left|f\left(x\right)-\sum\nolimits _{\lambda\in\Lambda_{\text{fin}}}c_{\lambda}e^{i\lambda x}\right|^{2}d\nu\left(x\right)\rightarrow0\label{eq:d22}
\end{equation}
where the limit in (\ref{eq:d22}) is over the filter of all finite
subsets of the given set $\Lambda\in\mathscr{D}_{k}$. The conclusion
of the theorem now follows from this, combined with (\ref{eq:d19})-(\ref{eq:d20}),
and the previous lemmas. 

Conversely, suppose $\mathscr{H}_{F_{\Lambda}}=L^{2}\left(\nu\right)$
for some $\nu$, a finite positive Borel measure; then $e_{\lambda}\left(x\right):=e^{i\lambda x}$
is in $\mathscr{H}_{F_{\Lambda}}$ for all $\lambda\in\Lambda$. Now,
by the above, we know that every 
\begin{equation}
f\in\mathscr{H}_{F_{\Lambda}}\left(=L^{2}\left(\nu\right)\right)\label{eq:d23}
\end{equation}
has the representation 
\begin{equation}
f\left(x\right)=\sum_{\lambda\in\Lambda}\widehat{\varphi}\left(\lambda\right)e_{\lambda}\left(x\right)\label{eq:d24}
\end{equation}
where $\widehat{\varphi}\left(\lambda\right)$ denotes the standard
Fourier transform, i.e., $\widehat{\varphi}\left(\lambda\right)=\int_{\mathbb{R}^{k}}\varphi\left(x\right)e^{-i\lambda x}dx$,
and $dx=$ the $k$-dimensional Lebesgue measure; see (\ref{eq:d20}).
We now verify that $\nu\in\mathscr{M}_{\text{Pa}}\left(\Lambda\right)$,
i.e., that 
\begin{equation}
\sum\nolimits _{\lambda\in\Lambda}\left|\left\langle f,e_{\lambda}\right\rangle _{L^{2}\left(\nu\right)}\right|^{2}=\left\Vert f\right\Vert _{L^{2}\left(\nu\right)}^{2}.\label{eq:d25}
\end{equation}
We have 
\begin{alignat*}{2}
\text{RHS}_{\left(\ref{eq:d25}\right)} & =\left\Vert \sum\nolimits _{\lambda\in\Lambda}\widehat{\varphi}\left(\lambda\right)e_{\lambda}\left(\cdot\right)\right\Vert _{\mathscr{H}_{F_{\Lambda}}}^{2} & \quad & \left(\text{by }\left(\ref{eq:d23}\right)\right)\\
 & =\sum\nolimits _{\lambda\in\Lambda}\left|\widehat{\varphi}\left(\lambda\right)\right|^{2} &  & \left(\text{by }\left(\ref{eq:d21}\right)\:\text{and }\left(\ref{eq:d24}\right)\right)\\
 & =\sum\nolimits _{\lambda\in\Lambda}\left|\left\langle f,e_{\lambda}\right\rangle _{\mathscr{H}_{F_{\Lambda}}}\right|^{2}\\
 & =\sum\nolimits _{\lambda\in\Lambda}\left|\left\langle f,e_{\lambda}\right\rangle _{L^{2}\left(\nu\right)}\right|^{2} &  & \left(\text{by }\left(\ref{eq:d23}\right)\right)\\
 & =\text{LHS}_{\left(\ref{eq:d25}\right)}
\end{alignat*}
which is the desired conclusion, i.e., $\nu\in\mathscr{M}_{\text{Pa}}\left(\Lambda\right)$. 
\end{proof}
\begin{cor}
Let the setting be as in Theorem \ref{thm:d3}, i.e., $\Lambda\in\mathscr{D}_{k}$
is given, and a positive finite measure $\nu$ on $\mathbb{R}^{k}$
is assumed to satisfy $\mathscr{H}_{F_{\Lambda}}=L^{2}\left(\nu\right)$,
see Theorem \ref{thm:d7}. Then $\nu\left(\mathbb{R}^{k}\right)\leq1$,
and $\nu\left(\mathbb{R}^{k}\right)=1$ holds if and only if $\left(\nu,\Lambda\right)$
is a spectral pair, i.e., $\nu\in\mathscr{M}\left(\Lambda\right)$. 
\end{cor}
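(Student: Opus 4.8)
The plan is to reduce the statement to the Parseval-frame characterization of \thmref{d3} and then to exploit the elementary self-duality identity for frames. By \thmref{d3}, the hypothesis $\mathscr{H}_{F_{\Lambda}}=L^{2}\left(\nu\right)$ is equivalent to $\nu\in\mathscr{M}_{\text{Pa}}\left(\Lambda\right)$; that is, writing $e_{\lambda}\left(x\right):=e^{i\lambda\cdot x}$, the system $\left\{ e_{\lambda}\right\} _{\lambda\in\Lambda}$ is a Parseval frame in $L^{2}\left(\nu\right)$:
\[
\left\Vert f\right\Vert _{L^{2}\left(\nu\right)}^{2}=\sum_{\lambda\in\Lambda}\bigl|\left\langle f,e_{\lambda}\right\rangle _{L^{2}\left(\nu\right)}\bigr|^{2},\qquad\forall f\in L^{2}\left(\nu\right).
\]
If $\nu\equiv0$ then $\nu\left(\mathbb{R}^{k}\right)=0<1$ and there is nothing to prove, so I would assume $\nu\neq0$ and fix any $\lambda_{0}\in\Lambda$.

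First I would record that, because $\left|e^{i\lambda\cdot x}\right|\equiv1$, every frame vector has the same norm $c:=\left\Vert e_{\lambda}\right\Vert _{L^{2}\left(\nu\right)}^{2}=\int_{\mathbb{R}^{k}}d\nu=\nu\left(\mathbb{R}^{k}\right)>0$. Plugging $f=e_{\lambda_{0}}$ into the Parseval identity and splitting off the $\lambda=\lambda_{0}$ term gives
\[
c=\left\Vert e_{\lambda_{0}}\right\Vert _{L^{2}\left(\nu\right)}^{2}=\sum_{\lambda\in\Lambda}\bigl|\left\langle e_{\lambda_{0}},e_{\lambda}\right\rangle \bigr|^{2}=c^{2}+\sum_{\lambda\in\Lambda\setminus\left\{ \lambda_{0}\right\} }\bigl|\left\langle e_{\lambda_{0}},e_{\lambda}\right\rangle \bigr|^{2}\ \geq\ c^{2},
\]
hence $c\left(1-c\right)\geq0$, i.e. $\nu\left(\mathbb{R}^{k}\right)=c\leq1$. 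This proves the first assertion.

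For the equivalence, suppose $\nu\left(\mathbb{R}^{k}\right)=1$, i.e. $c=1$. Then the last display is an equality, forcing $\sum_{\lambda\neq\lambda_{0}}\left|\left\langle e_{\lambda_{0}},e_{\lambda}\right\rangle \right|^{2}=0$; since $\lambda_{0}$ was arbitrary, $\left\langle e_{\lambda_{0}},e_{\lambda}\right\rangle _{L^{2}\left(\nu\right)}=0$ whenever $\lambda\neq\lambda_{0}$, so $\left\{ e_{\lambda}\right\} _{\lambda\in\Lambda}$ is an orthonormal system. It is also complete: if $\left\langle f,e_{\lambda}\right\rangle =0$ for all $\lambda$, then $\left\Vert f\right\Vert ^{2}=\sum_{\lambda}\left|\left\langle f,e_{\lambda}\right\rangle \right|^{2}=0$ by the Parseval identity. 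Thus $\left\{ e_{\lambda}\right\} _{\lambda\in\Lambda}$ is an ONB of $L^{2}\left(\nu\right)$, i.e. $\left(\nu,\Lambda\right)$ is a spectral pair and $\nu\in\mathscr{M}\left(\Lambda\right)$. Conversely, if $\left(\nu,\Lambda\right)$ is a spectral pair then each $e_{\lambda}$ is a unit vector of $L^{2}\left(\nu\right)$, so $\nu\left(\mathbb{R}^{k}\right)=\left\Vert e_{\lambda_{0}}\right\Vert _{L^{2}\left(\nu\right)}^{2}=1$.

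I do not expect a genuine obstacle: the argument is essentially a one-line consequence of \thmref{d3} together with the observation that a Parseval frame of unit vectors is an orthonormal basis. The only points requiring a word of care are separating off the trivial case $\nu\equiv0$ and noting that all the exponentials $e_{\lambda}$ share the same $L^{2}\left(\nu\right)$-norm, which is what makes the self-duality estimate yield the uniform bound $\nu\left(\mathbb{R}^{k}\right)\leq1$.
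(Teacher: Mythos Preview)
Your proof is correct and follows essentially the same route as the paper: reduce to the Parseval-frame conclusion of \thmref{d3}, then plug a frame vector $e_{\lambda_{0}}$ into the Parseval identity to obtain $c=c^{2}+\sum_{\lambda\neq\lambda_{0}}|\langle e_{\lambda_{0}},e_{\lambda}\rangle|^{2}$, from which both $c\leq1$ and the equivalence follow. The paper packages this step as a general lemma (a Parseval frame consists of unit vectors iff it is an ONB), while you specialize it directly to $\{e_{\lambda}\}$; your additional remarks on the trivial case $\nu\equiv0$ and on completeness are fine and do no harm.
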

\begin{proof}
The result follows from Theorem \ref{thm:d3}, combined with the following:
\begin{lem}
Let $\mathscr{H}$ be a Hilbert space, and let $\left\{ \varphi_{j}\right\} _{j\in J}$,
$J$ countable, be a Parseval frame, i.e., 
\begin{equation}
\left\Vert h\right\Vert _{\mathscr{H}}^{2}=\sum_{j\in J}\left|\left\langle \varphi_{j},h\right\rangle _{\mathscr{H}}\right|^{2}\label{eq:da4}
\end{equation}
holds for all $h\in\mathscr{H}$. 

Then $\left\{ \varphi_{j}\right\} _{j\in J}$ is an ONB if and only
if $\left\Vert \varphi_{j}\right\Vert _{\mathscr{H}}=1$, for all
$j\in J$. In general, $\left\Vert \varphi_{j}\right\Vert _{\mathscr{H}}\leq1$,
$\forall j\in J$. 
\begin{proof}
Pick $j\in J$, and apply (\ref{eq:da4}) to $h=\varphi_{j}$, we
get 
\begin{equation}
\left\Vert \varphi_{j}\right\Vert _{\mathscr{H}}^{2}=\left\Vert \varphi_{j}\right\Vert _{\mathscr{H}}^{4}+\sum_{i\in J\backslash\left\{ j\right\} }\left|\left\langle \varphi_{i},\varphi_{j}\right\rangle _{\mathscr{H}}\right|^{2}\geq\left\Vert \varphi_{j}\right\Vert _{\mathscr{H}}^{4}.\label{eq:da5}
\end{equation}
The conclusion in the lemma is immediate from this: In particular,
$\left\Vert \varphi_{j}\right\Vert _{\mathscr{H}}=1$ holds iff the
terms on the RHS in (\ref{eq:da5}) $\left\langle \varphi_{i},\varphi_{j}\right\rangle _{\mathscr{H}}$,
$i\neq j$, $i,j\in J$, vanish.
\end{proof}
\end{lem}
The corollary follows since when $\nu$, $\Lambda$ are as specified
as stated, then 
\[
\left\Vert e_{\lambda}\right\Vert _{L^{2}\left(\nu\right)}^{2}=\int_{\mathbb{R}^{k}}\left|e_{\lambda}\right|^{2}d\nu=\nu\left(\mathbb{R}^{k}\right).
\]

\end{proof}

\section{A Result from \cite{2017arXiv170504198H}}

The conclusions in \cite{2017arXiv170504198H} introduced a new and
different approach to the study of spectral theory for $L^{2}\left(\nu\right)$
in the case where $\nu$ is a given compactly supported singular measure
on $\mathbb{R}$. Below we include a brief sketch. 

Let $\nu$ be a singular measure supported on $\left[0,1\right]=\mathbb{R}/\mathbb{Z}=$
the boundary of the disk $\mathbb{D}$, $\mathbb{D}=\left\{ z\in\mathbb{C}\mathrel{;}\left|z\right|<1\right\} $.
For $n\in\mathbb{N}_{0}=\mathbb{N}\cup\left\{ 0\right\} $, set $e_{n}\left(x\right)=e^{i2\pi nx}$,
and pass to the following Kaczmarz algorithm (see, e.g., \cite{MR2311862,MR3117886}):
\begin{align}
f_{0} & =\left\langle f,e_{0}\right\rangle _{\nu}e_{0}\nonumber \\
 & \cdots\label{eq:a1}\\
f_{n}\left(x\right) & =f_{n-1}\left(x\right)+\left\langle f-f_{n-1},e_{n}\right\rangle _{\nu}e_{n}\left(x\right).\nonumber 
\end{align}
Then the associated sequence 
\begin{align}
g_{0} & =e_{0}\nonumber \\
 & \cdots\label{eq:a2}\\
g_{n}\left(x\right) & =e_{n}\left(x\right)-\sum_{j=0}^{n-1}\underset{\widehat{\nu}\left(n-j\right)}{\underbrace{\left\langle e_{n},e_{j}\right\rangle _{\nu}}}g_{j}\left(x\right)\nonumber 
\end{align}
has the form: 
\begin{equation}
\sum_{j=0}^{n}\overline{\alpha_{n-j}}e_{j}\left(x\right)=g_{n}\left(x\right),\label{eq:a3}
\end{equation}
and 
\begin{thm}[\cite{2017arXiv170504198H}]
Let $\nu$ and $\left\{ g_{n}\right\} _{n\in\mathbb{N}_{0}}$be as
sketched; then every $f\in L^{2}\left(\nu\right)$ has the following
frame expansion: 
\begin{align}
f\left(x\right) & =\sum_{n\in\mathbb{N}_{0}}\left\langle f,g_{n}\right\rangle _{\nu}e_{n}\left(x\right)\label{eq:a4}\\
 & =\sum_{n\in\mathbb{N}_{0}}\left(\sum_{j=0}^{n}\widehat{f}\left(j\right)\alpha_{n-j}\right)e_{n}\left(x\right),\nonumber 
\end{align}
where $\left(\overline{\alpha_{n-j}}\right)$ is the $\left(\infty\times\infty\right)$-matrix
inverse to the Gramian $\left(\widehat{\nu}\left(n-j\right)\right)_{j\leq n}$. 

Moreover, the system $\left\{ g_{n}\right\} _{n\in\mathbb{N}_{0}}$
in (\ref{eq:a3}) is a Parseval frame in $L^{2}\left(\nu\right)$,
i.e., 
\[
\left\Vert f\right\Vert _{L^{2}\left(\nu\right)}^{2}=\sum_{n\in\mathbb{N}_{0}}\left|\left\langle f,g_{n}\right\rangle _{L^{2}\left(\nu\right)}\right|^{2}
\]
holds for all $f\in L^{2}\left(\nu\right)$. 
\end{thm}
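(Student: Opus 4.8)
The plan is to read $\left\{ e_{n}\right\} _{n\in\mathbb{N}_{0}}$ as a \emph{stationary} sequence of unit vectors in $L^{2}\left(\nu\right)$ and to feed it into the theory of \emph{effective} sequences for the Kaczmarz algorithm (Kwapie\'n--Mycielski; see \cite{MR2311862,MR3117886}); the only analytic ingredient will be that singularity of $\nu$ makes this sequence effective, after which the reconstruction (\ref{eq:a4}) and the Parseval identity fall out formally. Note first that, $\nu$ being a probability measure, $\left\Vert e_{n}\right\Vert _{L^{2}\left(\nu\right)}=1$ and $\left\langle e_{n},e_{m}\right\rangle _{L^{2}\left(\nu\right)}=\int_{0}^{1}e^{i2\pi\left(n-m\right)x}\,d\nu\left(x\right)=\widehat{\nu}\left(n-m\right)$ depends only on $n-m$, which is the stationarity.

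First I would dispose of the purely formal parts. An induction on $n$ using (\ref{eq:a2}) and stationarity shows $g_{n}=\sum_{j=0}^{n}\overline{\alpha_{n-j}}\,e_{j}$ for scalars $\alpha_{0},\alpha_{1},\dots$ with $\alpha_{0}=1$ --- this is (\ref{eq:a3}) --- and that these scalars obey the lower-triangular Toeplitz relation $\sum_{m=0}^{\ell}\overline{\alpha_{m}}\,\widehat{\nu}\left(\ell-m\right)=\delta_{\ell,0}$ ($\ell\ge0$), i.e. $\left(\overline{\alpha_{n-j}}\right)_{0\le j\le n}$ inverts the Gramian $\left(\widehat{\nu}\left(n-j\right)\right)_{0\le j\le n}$. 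Pairing $g_{n}$ with $f$ gives $\left\langle f,g_{n}\right\rangle _{\nu}=\sum_{j=0}^{n}\widehat{f}\left(j\right)\alpha_{n-j}$, the inner sum in (\ref{eq:a4}). A second induction unwinds the Kaczmarz recursion (\ref{eq:a1}): with $f_{-1}:=0$, the definition of $g_{n}$ gives $\left\langle f-f_{n-1},e_{n}\right\rangle _{\nu}=\left\langle f,g_{n}\right\rangle _{\nu}$, whence $f_{n}=\sum_{m=0}^{n}\left\langle f,g_{m}\right\rangle _{\nu}e_{m}$. So the theorem is reduced to two assertions, for all $f\in L^{2}\left(\nu\right)$: (i) $f_{n}\to f$ in $L^{2}\left(\nu\right)$, and (ii) $\left\Vert f\right\Vert _{L^{2}\left(\nu\right)}^{2}=\sum_{n}\left|\left\langle f,g_{n}\right\rangle _{\nu}\right|^{2}$.

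The crux is (i). Since $\left\Vert e_{n}\right\Vert _{\nu}=1$, the map $P_{e_{n}}h:=\left\langle h,e_{n}\right\rangle _{\nu}e_{n}$ is the orthogonal projection onto $\mathbb{C}e_{n}$, and (\ref{eq:a1}) gives $f-f_{n}=\left(I-P_{e_{n}}\right)\cdots\left(I-P_{e_{0}}\right)f=:Q_{n}f$; thus (i) (for every $f$) is precisely the statement that the stationary sequence $\left\{ e_{n}\right\} $ is effective and spans $L^{2}\left(\nu\right)$. Here I would invoke the classical dichotomy from prediction theory / Szeg\H{o}'s theorem: for the stationary sequence attached to a probability measure $\mu$ on the circle, both $\overline{\mathrm{span}}\left\{ e_{n}:n\ge0\right\} =L^{2}\left(\mu\right)$ and effectiveness hold as soon as the density $w=d\mu_{\mathrm{ac}}/dx$ of the absolutely continuous part satisfies $\int_{0}^{1}\log w\left(x\right)\,dx=-\infty$; indeed the $L^{2}\left(\mu\right)$-distance from $e_{0}$ to $\overline{\mathrm{span}}\left\{ e_{n}:n\ge1\right\} $ equals $\exp\left(\int_{0}^{1}\log w\right)$, which vanishes exactly in that case, and this is the mechanism forcing $Q_{n}\to0$ strongly. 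Since $\nu$ is singular, $w\equiv0$ and the condition is automatic, so $\left\{ e_{n}\right\} $ is effective in $L^{2}\left(\nu\right)$. This transfer of the Szeg\H{o}--Kolmogorov alternative into the Kaczmarz framework --- in essence the observation of \cite{2017arXiv170504198H} --- is the one non-formal point, and I expect it to be the main obstacle to a fully self-contained argument.

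Granting (i), both remaining claims follow. Assertion (i) itself reads $f=\lim_{n}f_{n}=\sum_{n\in\mathbb{N}_{0}}\left\langle f,g_{n}\right\rangle _{\nu}e_{n}$ in $L^{2}\left(\nu\right)$, which with the bookkeeping above is exactly (\ref{eq:a4}). For (ii) I would telescope: since each $P_{e_{n}}$ is an orthogonal projection and $f-f_{n}=\left(I-P_{e_{n}}\right)\left(f-f_{n-1}\right)$, we get $\left\Vert f-f_{n-1}\right\Vert _{\nu}^{2}-\left\Vert f-f_{n}\right\Vert _{\nu}^{2}=\left\Vert P_{e_{n}}\left(f-f_{n-1}\right)\right\Vert _{\nu}^{2}=\left|\left\langle f-f_{n-1},e_{n}\right\rangle _{\nu}\right|^{2}=\left|\left\langle f,g_{n}\right\rangle _{\nu}\right|^{2}$ (the last equality from the bookkeeping, with $f_{-1}=0$). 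Summing $n=0,\dots,N$ telescopes the left side to $\left\Vert f\right\Vert _{\nu}^{2}-\left\Vert f-f_{N}\right\Vert _{\nu}^{2}$, which tends to $\left\Vert f\right\Vert _{\nu}^{2}$ by (i); hence $\left\Vert f\right\Vert _{L^{2}\left(\nu\right)}^{2}=\sum_{n\in\mathbb{N}_{0}}\left|\left\langle f,g_{n}\right\rangle _{L^{2}\left(\nu\right)}\right|^{2}$, i.e. $\left\{ g_{n}\right\} _{n\in\mathbb{N}_{0}}$ is a Parseval frame in $L^{2}\left(\nu\right)$. That completes the plan.
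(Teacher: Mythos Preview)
Your proposal is correct and, in fact, supplies precisely what the paper omits: the paper's own ``proof'' of this theorem consists solely of the sentence ``Readers are referred to \cite{2017arXiv170504198H} for proof details,'' so there is no in-paper argument to compare against. Your sketch---formal bookkeeping for the $g_n$ via the lower-triangular Toeplitz inversion, reduction to effectiveness of the stationary sequence $\{e_n\}$, invocation of the Szeg\H{o}--Kolmogorov alternative (singular $\nu$ $\Rightarrow$ $\log w$ not integrable $\Rightarrow$ deterministic/effective), and the telescoping Pythagorean identity for the Parseval property---is exactly the line of argument in \cite{2017arXiv170504198H}, which in turn rests on the Kwapie\'n--Mycielski effectiveness criterion \cite{MR2311862}. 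So you have reconstructed the cited proof rather than offered an alternative.
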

\begin{proof}
Readers are refereed to \cite{2017arXiv170504198H} for proof details. 
\end{proof}
\begin{acknowledgement*}
The first named author thanks the members of the Analysis, Probability
and Mathematical Physics on Fractals at Cornell University in the
Spring of 2017 for inspiration, encouragements, and many helpful conversations.
As well as collaborations and many fruitful discussions with D. Alpay,
D. Dutkay, J. Herr, S. Pedersen, and E. Weber.
\end{acknowledgement*}
\bibliographystyle{amsalpha}
\bibliography{ref}

\providecommand{\bysame}{\leavevmode\hbox to3em{\hrulefill}\thinspace}
\providecommand{\MR}{\relax\ifhmode\unskip\space\fi MR }
% \MRhref is called by the amsart/book/proc definition of \MR.
\providecommand{\MRhref}[2]{%
  \href{http://www.ams.org/mathscinet-getitem?mr=#1}{#2}
}
\providecommand{\href}[2]{#2}
\begin{thebibliography}{DHSW11}

\bibitem[AJ12]{MR2966130}
Daniel Alpay and Palle E.~T. Jorgensen, \emph{Stochastic processes induced by
  singular operators}, Numer. Funct. Anal. Optim. \textbf{33} (2012), no.~7-9,
  708--735. \MR{2966130}

\bibitem[AJ15]{MR3402823}
Daniel Alpay and Palle Jorgensen, \emph{Spectral theory for {G}aussian
  processes: reproducing kernels, boundaries, and {$L^2$}-wavelet generators
  with fractional scales}, Numer. Funct. Anal. Optim. \textbf{36} (2015),
  no.~10, 1239--1285. \MR{3402823}

\bibitem[AJL11]{MR2793121}
Daniel Alpay, Palle Jorgensen, and David Levanony, \emph{A class of {G}aussian
  processes with fractional spectral measures}, J. Funct. Anal. \textbf{261}
  (2011), no.~2, 507--541. \MR{2793121}

\bibitem[Aro50]{MR0051437}
N.~Aronszajn, \emph{Theory of reproducing kernels}, Trans. Amer. Math. Soc.
  \textbf{68} (1950), 337--404. \MR{0051437}

\bibitem[aSW14]{MR3329098}
Sa'ud al~Sa'di and Eric Weber, \emph{Necessary density conditions for sampling
  and interpolation in de {B}ranges spaces}, Operator methods in wavelets,
  tilings, and frames, Contemp. Math., vol. 626, Amer. Math. Soc., Providence,
  RI, 2014, pp.~129--147. \MR{3329098}

\bibitem[BG16]{MR3493304}
Michael Baake and Franz G\"ahler, \emph{Pair correlations of aperiodic
  inflation rules via renormalisation: some interesting examples}, Topology
  Appl. \textbf{205} (2016), 4--27. \MR{3493304}

\bibitem[BH15a]{MR3485709}
Michael Baake and Christian Huck, \emph{Ergodic properties of visible lattice
  points}, Proc. Steklov Inst. Math. \textbf{288} (2015), no.~1, 165--188.
  \MR{3485709}

\bibitem[BH15b]{MR3421919}
Bernhard~G. Bodmann and John Haas, \emph{Frame potentials and the geometry of
  frames}, J. Fourier Anal. Appl. \textbf{21} (2015), no.~6, 1344--1383.
  \MR{3421919}

\bibitem[BL17]{MR3600642}
Michael Baake and Daniel Lenz, \emph{Spectral notions of aperiodic order},
  Discrete Contin. Dyn. Syst. Ser. S \textbf{10} (2017), no.~2, 161--190.
  \MR{3600642}

\bibitem[CT13]{MR3117886}
Wojciech Czaja and James~H. Tanis, \emph{Kaczmarz algorithm and frames}, Int.
  J. Wavelets Multiresolut. Inf. Process. \textbf{11} (2013), no.~5, 1350036,
  13. \MR{3117886}

\bibitem[Dau88]{MR951745}
Ingrid Daubechies, \emph{Orthonormal bases of compactly supported wavelets},
  Comm. Pure Appl. Math. \textbf{41} (1988), no.~7, 909--996. \MR{951745}

\bibitem[DHL13]{MR3055992}
Xin-Rong Dai, Xing-Gang He, and Chun-Kit Lai, \emph{Spectral property of
  {C}antor measures with consecutive digits}, Adv. Math. \textbf{242} (2013),
  187--208. \MR{3055992}

\bibitem[DHS09]{MR2509326}
Dorin~Ervin Dutkay, Deguang Han, and Qiyu Sun, \emph{On the spectra of a
  {C}antor measure}, Adv. Math. \textbf{221} (2009), no.~1, 251--276.
  \MR{2509326}

\bibitem[DHSW11]{MR2735759}
Dorin~Ervin Dutkay, Deguang Han, Qiyu Sun, and Eric Weber, \emph{On the
  {B}eurling dimension of exponential frames}, Adv. Math. \textbf{226} (2011),
  no.~1, 285--297. \MR{2735759}

\bibitem[DJ09]{MR2457304}
Dorin~Ervin Dutkay and Palle E.~T. Jorgensen, \emph{Quasiperiodic spectra and
  orthogonality for iterated function system measures}, Math. Z. \textbf{261}
  (2009), no.~2, 373--397. \MR{2457304}

\bibitem[DJ12]{MR2945156}
\bysame, \emph{Spectral measures and {C}untz algebras}, Math. Comp. \textbf{81}
  (2012), no.~280, 2275--2301. \MR{2945156}

\bibitem[DJ13]{MR3048586}
\bysame, \emph{On the universal tiling conjecture in dimension one}, J. Fourier
  Anal. Appl. \textbf{19} (2013), no.~3, 467--477. \MR{3048586}

\bibitem[DJ15]{MR3318644}
\bysame, \emph{Unitary groups and spectral sets}, J. Funct. Anal. \textbf{268}
  (2015), no.~8, 2102--2141. \MR{3318644}

\bibitem[HJW17]{2017arXiv170504198H}
J.~E. {Herr}, P.~E.~T. {Jorgensen}, and E.~S. {Weber}, \emph{{A matrix
  characterization of boundary representations of positive matrices in the
  Hardy space}}, to appear in {Journal d'Analyse Math\'ematique} (2017).

\bibitem[Jor86]{MR863534}
Palle E.~T. Jorgensen, \emph{Analytic continuation of local representations of
  {L}ie groups}, Pacific J. Math. \textbf{125} (1986), no.~2, 397--408.
  \MR{863534 (88m:22030)}

\bibitem[Jor87]{MR874059}
\bysame, \emph{Analytic continuation of local representations of symmetric
  spaces}, J. Funct. Anal. \textbf{70} (1987), no.~2, 304--322. \MR{874059}

\bibitem[JP93]{MR1215311}
Palle E.~T. Jorgensen and Steen Pedersen, \emph{Harmonic analysis of fractal
  measures induced by representations of a certain {$C^*$}-algebra}, Bull.
  Amer. Math. Soc. (N.S.) \textbf{29} (1993), no.~2, 228--234. \MR{1215311}

\bibitem[JP98]{MR1655831}
\bysame, \emph{Dense analytic subspaces in fractal {$L^2$}-spaces}, J. Anal.
  Math. \textbf{75} (1998), 185--228. \MR{1655831}

\bibitem[JT17]{JT17a}
Palle Jorgensen and Feng Tian, \emph{{Positive definite (p.d.) functions vs
  p.d. distributions}}, submitted to {Analysis and Applications} (2017).

\bibitem[Law91]{MR1083085}
Wayne~M. Lawton, \emph{Necessary and sufficient conditions for constructing
  orthonormal wavelet bases}, J. Math. Phys. \textbf{32} (1991), no.~1, 57--61.
  \MR{1083085}

\bibitem[LH17]{MR3606269}
Zhongyan Li and Deguang Han, \emph{Frame vector multipliers for finite group
  representations}, Linear Algebra Appl. \textbf{519} (2017), 191--207.
  \MR{3606269}

\bibitem[LO17]{MR3667579}
Nir Lev and Alexander Olevskii, \emph{Fourier quasicrystals and discreteness of
  the diffraction spectrum}, Adv. Math. \textbf{315} (2017), 1--26.
  \MR{3667579}

\bibitem[LW17]{MR3702855}
Chun-Kit Lai and Yang Wang, \emph{Non-spectral fractal measures with {F}ourier
  frames}, J. Fractal Geom. \textbf{4} (2017), no.~3, 305--327. \MR{3702855}

\bibitem[Mal89]{MR1008470}
Stephane~G. Mallat, \emph{Multiresolution approximations and wavelet
  orthonormal bases of {$L^2({\bf R})$}}, Trans. Amer. Math. Soc. \textbf{315}
  (1989), no.~1, 69--87. \MR{1008470}

\bibitem[PW17]{MR3622655}
Gabriel Picioroaga and Eric~S. Weber, \emph{Fourier frames for the {C}antor-4
  set}, J. Fourier Anal. Appl. \textbf{23} (2017), no.~2, 324--343.
  \MR{3622655}

\bibitem[Sch64a]{MR0185423}
L.~Schwartz, \emph{Sous-espaces hilbertiens et noyaux associ\'es; applications
  aux repr\'esentations des groupes de {L}ie}, Deuxi\`eme {C}olloq. l'{A}nal.
  {F}onct, Centre Belge Recherches Math., Librairie Universitaire, Louvain,
  1964, pp.~153--163. \MR{0185423}

\bibitem[Sch64b]{MR0179587}
Laurent Schwartz, \emph{Sous-espaces hilbertiens d'espaces vectoriels
  topologiques et noyaux associ\'es (noyaux reproduisants)}, J. Analyse Math.
  \textbf{13} (1964), 115--256. \MR{0179587}

\bibitem[Sto32]{MR1503079}
M.~H. Stone, \emph{On one-parameter unitary groups in {H}ilbert space}, Ann. of
  Math. (2) \textbf{33} (1932), no.~3, 643--648. \MR{1503079}

\bibitem[Str98a]{MR1629847}
Robert~S. Strichartz, \emph{Fractals in the large}, Canad. J. Math. \textbf{50}
  (1998), no.~3, 638--657. \MR{1629847}

\bibitem[Str98b]{MR1655832}
\bysame, \emph{Remarks on: ``{D}ense analytic subspaces in fractal
  {$L^2$}-spaces'' [{J}.\ {A}nal.\ {M}ath.\ {\bf 75} (1998), 185--228;
  {MR}1655831 (2000a:46045)] by {P}. {E}. {T}. {J}orgensen and {S}.
  {P}edersen}, J. Anal. Math. \textbf{75} (1998), 229--231. \MR{1655832}

\bibitem[Str00]{MR1785282}
\bysame, \emph{Mock {F}ourier series and transforms associated with certain
  {C}antor measures}, J. Anal. Math. \textbf{81} (2000), 209--238. \MR{1785282}

\bibitem[Str12]{MR2921088}
\bysame, \emph{Spectral asymptotics revisited}, J. Fourier Anal. Appl.
  \textbf{18} (2012), no.~3, 626--659. \MR{2921088}

\bibitem[SZ07]{MR2327597}
Steve Smale and Ding-Xuan Zhou, \emph{Learning theory estimates via integral
  operators and their approximations}, Constr. Approx. \textbf{26} (2007),
  no.~2, 153--172. \MR{2327597}

\bibitem[SZ09]{MR2558684}
\bysame, \emph{Geometry on probability spaces}, Constr. Approx. \textbf{30}
  (2009), no.~3, 311--323. \MR{2558684}

\bibitem[Szw07]{MR2311862}
Ryszard Szwarc, \emph{Kaczmarz algorithm in {H}ilbert space and tight frames},
  Appl. Comput. Harmon. Anal. \textbf{22} (2007), no.~3, 382--385. \MR{2311862}

\end{thebibliography}

\end{document}